\documentclass[12pt, twoside]{article}
\usepackage{latexsym}
\usepackage{amsmath}
\usepackage{amssymb}
\usepackage[all]{xy}
\usepackage{amsfonts}
\usepackage{verbatim}
\usepackage{amsthm}
\usepackage{mathrsfs}
\usepackage{epsfig}
\usepackage{xy}
\usepackage{array}
\usepackage{stmaryrd}
\usepackage{graphicx,color}
\usepackage{xcolor}
\usepackage{tikz}
\usetikzlibrary{arrows,calc}
\usepackage{etex}
\usepackage{mathdots}
\usepackage{float}
\usepackage{graphics}
\usepackage{pdflscape}
\usepackage{mathrsfs}

\usepackage{anysize}
\usepackage[colorlinks=true,linkcolor=black,citecolor=blue,urlcolor=black]{hyperref}
\input xypic
\xyoption{all}

\usepackage[perpage,symbol]{footmisc}
\usepackage{setspace}
\def\C{\mathscr{C}}\def\N{\mathscr{N}}

\def\dr{\ar@{->}[r]}

\numberwithin{equation}{section}

\begin{document}
\baselineskip=15pt
\title{\Large{\bf A pre-$(n+2)$-angulated category which is not\\[2mm] $(n+2)$-angulated}}
\medskip
\author{Jian He, Jing He and Panyue Zhou}
{\footnotetext{Jian He is supported by the National Natural Science Foundation of China (Grant Nos. 12501048, 12661005) and the Hongliu Outstanding Young Talents Funding of Lanzhou University of Technology. Jing He is supported by the National Natural Science Foundation of
China (Grant No. 12401045). Panyue Zhou is supported by the National Natural Science Foundation of China (Grant No. 12371034).}}

\date{}

\maketitle
\def\blue{\color{blue}}
\def\red{\color{red}}

\newtheorem{theorem}{Theorem}[section]
\newtheorem{lemma}[theorem]{Lemma}
\newtheorem{corollary}[theorem]{Corollary}
\newtheorem{proposition}[theorem]{Proposition}
\newtheorem{conjecture}{Conjecture}
\theoremstyle{definition}
\newtheorem{definition}[theorem]{Definition}
\newtheorem{question}[theorem]{Question}
\newtheorem{remark}[theorem]{Remark}
\newtheorem{remark*}[]{Remark}
\newtheorem{example}[theorem]{Example}
\newtheorem{example*}[]{Example}
\newtheorem{condition}[theorem]{Condition}
\newtheorem{condition*}[]{Condition}
\newtheorem{construction}[theorem]{Construction}
\newtheorem{construction*}[]{Construction}

\newtheorem{assumption}[theorem]{Assumption}
\newtheorem{assumption*}[]{Assumption}

\baselineskip=17pt
\parindent=0.5cm

\begin{abstract}
\baselineskip=16pt
We construct an explicit pre-$9$-angulated category which is not $9$-angulated, thereby giving a genuinely higher counterexample to the implication from pre-$(n+2)$-angulated to $(n+2)$-angulated. The underlying additive category is the category of finitely generated projective right modules over the preprojective algebra $\Pi(A_5)$ over $\mathbb F_2$. The construction is obtained by taking an odd power of the twisted complete comparison used by Chen--Liu--Lu--Zhang in their pre-triangulated counterexample and by showing that the resulting pre-$9$-angulation fails the higher mapping-cone axiom.
\\[0.5cm]
\textbf{Keywords:} $(n+2)$-angulated category; pre-$(n+2)$-angulated category; mapping-cone axiom; preprojective algebra\\[0.2cm]
\textbf{2020 Mathematics Subject Classification:} 18G80; 16G70
\medskip
\end{abstract}

\pagestyle{myheadings}
\markboth{\scriptsize J. He, J. He and P. Zhou}
         {\scriptsize A pre-$(n+2)$-angulated category not $(n+2)$-angulated}

\section{Introduction}

Triangulated categories were introduced in the 1960s by Grothendieck, Verdier and Puppe as an axiomatic framework for derived categories and stable homotopy theory \cite{H,P,V}. Geiss, Keller and Oppermann \cite{GKO} introduced $(n+2)$-angulated categories as higher-dimensional analogues of triangulated categories. An $(n+2)$-angulated category is an additive category equipped with an automorphism and a distinguished class of $(n+2)$-angles satisfying axioms (N1)--(N4). If the higher mapping-cone axiom (N4) is omitted, one obtains a pre-$(n+2)$-angulated category. Under axioms (N1)--(N3), the higher mapping-cone axiom is equivalent to the higher octahedral axiom introduced by Bergh and Thaule \cite{bt2}.

A natural question is whether every pre-$(n+2)$-angulated category must already be $(n+2)$-angulated. For a long time no counterexample was known, even in the classical case $n=1$; see, for example, \cite[Remark 2.1.5]{JM}. Chen, Liu, Lu and Zhang \cite{CLLZ} recently settled the classical case by constructing a Heller pre-triangulation on the category of finitely generated projective modules over the type $A_5$ preprojective algebra over $\mathbb F_2$ which fails Verdier's octahedral axiom.

The purpose of the present note is to show that the same phenomenon occurs in a genuinely higher dimension. The point is not simply to repeat the classical construction. The twisted complete comparison of \cite{CLLZ} is an involutive perturbation of the standard one, so an even power loses the obstruction. Taking the third power retains exactly one copy of the twist. We then combine this observation with the fixed-boundary rigidity in \cite{CLLZ} and Cheng--Yang's form of the higher mapping-cone axiom \cite{CY}.
\vspace{2mm}

Our main result is the following.

\begin{theorem}\label{HHZ}\rm
There exists an integer $n>1$ and a pre-$(n+2)$-angulated category which is not $(n+2)$-angulated. More precisely, let $n=7$, $k=\mathbb F_2$, $\Lambda=\Pi(A_5)$, $\mathcal F=\operatorname{proj}\text{-}\Lambda$, and let $\Sigma=(-)_\nu$, where $\nu$ is the graph-reflection automorphism of $\Lambda$ (equivalently, the automorphism defining the Nakayama twist). Then there is a class $\mathcal N_{\epsilon}^{[3]}$ of $9$-$\Sigma^3$-sequences such that
\[
(\mathcal F,\Sigma^3,\mathcal N_{\epsilon}^{[3]})
\]
is pre-$9$-angulated but not $9$-angulated.
\end{theorem}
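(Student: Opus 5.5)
We will follow the Heller-type construction of $(n+2)$-angulations from a Frobenius algebra with a periodic bimodule resolution, as in Amiot/Lin and Chen--Liu--Lu--Zhang. The input is a complete comparison: an exact sequence of $\Lambda$-bimodules
\[
0\to \Lambda_\nu\to P_{m}\to\cdots\to P_1\to P_0\to \Lambda\to 0
\]
with projective-injective terms $P_i$. For $\Lambda=\Pi(A_5)$ the standard minimal resolution gives $m=2$, i.e.\ $\Omega^3_{\Lambda^e}\Lambda\cong{}_1\Lambda_\nu$. Via Heller's recipe this yields a pre-triangulation $\mathcal N$ on $\mathcal F$ with suspension $(-)_\nu$. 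Chen--Liu--Lu--Zhang perturb the standard comparison by a twist $\epsilon$, which is a nonzero element of a suitable $\operatorname{Ext}/\underline{\operatorname{Hom}}$ group over $\mathbb F_2$. This produces $\mathcal N_\epsilon$, which is pre-triangulated but violates the octahedral axiom.

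We splice three copies of the (twisted) complete comparison to obtain
\[
0\to\Lambda_{\nu^3}\to Q_8\to\cdots\to Q_0\to\Lambda\to 0.
\]
This is a periodic resolution of period $9=n+2$ with $n=7$, and the splicing uses the twisted comparison in each copy. Tensoring $-\otimes_\Lambda$ with objects and morphisms of $\mathcal F$ produces the standard $9$-$\Sigma^3$-sequences. We define $\mathcal N^{[3]}_\epsilon$ to be all sequences isomorphic to these. The pre-$9$-angulated axioms (N1)--(N3) then follow from the general theorem (Amiot--Lin style) that any complete comparison of length $n+2$ for a Frobenius algebra gives a pre-$(n+2)$-angulation. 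Concretely, (N1) holds because every morphism $f\colon X\to Y$ in $\mathcal F$ embeds into such a sequence. (N2) follows from rotation using the isomorphism $\nu^3$, with the sign convention $(-1)^n$ matching the spliced connecting maps. (N3) follows by lifting maps through projective resolutions.

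Since $\epsilon$ is an involutive perturbation ($\epsilon^2$ acts as the identity, or $2\epsilon=0$ over $\mathbb F_2$), the spliced class of three twisted copies differs from the standard one by exactly one twist. Precisely, its Hochschild/Heller class equals $\eta_{\mathrm{std}}^{3}\cdot(1+\epsilon)$ rather than $\eta_{\mathrm{std}}^{3}$. The key bookkeeping point is that cross terms vanish or cancel mod $2$.

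To show (N4) fails, we use Cheng--Yang's reformulation. Given $f_1\colon X_1\to X_2$ and $g_2\colon X_2\to Y_3$, take $9$-angles on $f_1$, $g_2$ and $g_2f_1$. (N4) requires a good morphism between them whose mapping cone is again in $\mathcal N^{[3]}_\epsilon$. We choose the same objects used by Chen--Liu--Lu--Zhang for the octahedral counterexample, namely specific indecomposable projectives $P_i$ of $\Pi(A_5)$ and specific arrows. The $9$-angles on $f_1$ are then, up to a trivial summand, three-fold rotations/splicings of their triangles.

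Their fixed-boundary rigidity lemma says that any morphism of the relevant angles with fixed boundary components is uniquely determined modulo maps factoring through the middle terms. We apply it to reduce the possible comparison morphisms to essentially one choice. We then compute its mapping cone and show that the connecting map of the cone differs from any map in $\mathcal N^{[3]}_\epsilon$ by the single remaining copy of $\epsilon$, which is nonzero. This last step is the main obstacle. It requires tracking through the length-$9$ splicing that the obstruction class survives the passage to the cone, rather than being absorbed by the automorphism freedom of the intermediate terms. We would first try to show that this obstruction class is invariant under the automorphisms allowed by rigidity: compute it in the stable category $\underline{\operatorname{mod}}\,\Lambda^e$ as an element of $\underline{\operatorname{Hom}}(\Omega^9\Lambda,\Lambda_{\nu^3})$ and check it is not a coboundary.
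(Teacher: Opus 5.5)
\textbf{The twist is placed at the wrong level.} This is fatal, not cosmetic. You encode $\epsilon$ as a modification of a bimodule periodic resolution, and you define $\mathcal N^{[3]}_\epsilon$ as the sequences obtained by tensoring with the spliced resolution $0\to\Lambda_{\nu^3}\to Q_8\to\cdots\to Q_0\to\Lambda\to0$. But structures produced from a bimodule periodic resolution in this way are genuinely $(n+2)$-angulated (Amiot for $n=1$, Lin in general). So anything you build like this satisfies \textbf{(N4)}. For the same reason the Chen--Liu--Lu--Zhang twist is not bimodule-induced; otherwise their $\Delta_\epsilon$ would be triangulated.

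Their twist is $\epsilon=1+\eta$, a natural automorphism of the identity functor of the one-sided stable category $\underline{\operatorname{mod}}\Lambda$. It is supported on $\operatorname{add}\{M,\Omega M\}$, with $\eta_M=\rho$, $\eta^2=0$ and $\eta S=S\eta$. The right vehicle is the Geiss--Keller--Oppermann parametrisation of pre-$9$-angulations of $(\mathcal F,\Sigma^3)$ by triangle-functor isomorphisms $\Sigma^3\to S^9$. Take $(\Theta^{[3]}_\epsilon)_X=S^6((\theta_\epsilon)_X)\circ S^3((\theta_\epsilon)_{\Sigma X})\circ(\theta_\epsilon)_{\Sigma^2X}$; this gives (N1)--(N3) at once. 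Your heuristic that exactly one twist survives is correct. However, it has to be proved as the identity $\Theta^{[3]}_\epsilon=\epsilon_{S^9}\circ\Theta^{[3]}_0$, using naturality of $\epsilon$, $\epsilon S=S\epsilon$ and $\epsilon^2=1$. The final test you propose in $\underline{\operatorname{Hom}}_{\Lambda^e}(\Omega^9\Lambda,\Lambda_{\nu^3})$ cannot detect the obstruction at all.

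\textbf{The failure of (N4) is not carried out.} You flag this as the main obstacle and leave it open. The octahedral set-up with $f_1,g_2,g_2f_1$ between projectives also leaves too much freedom in the completing maps for rigidity to control. The missing idea is to choose rows on which the twist is invisible and a cone on which it is visible.

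\begin{enumerate}
\item Take the degreewise split sequence $0\to T_A\xrightarrow{x_\bullet}T_B\to T_M\to0$ of Chen--Liu--Lu--Zhang, with first kernels $A,B,M$. Neither $A$ nor $B$ has a stable summand in $\{M,\Omega M\}$. Hence $T_A,T_B\in\Delta_\epsilon$, so $P_3(T_A),P_3(T_B)\in\mathcal N^{[3]}_\epsilon$, while $T_M$ has comparison $\theta_0$.
\item Apply Cheng--Yang's Axiom A with $i=7$, keeping the first eight components of $P_3(x_\bullet)$. Fixed-boundary rigidity forces the one free component to be $(\Sigma^2 s)(\Sigma^2x_2)$ for an automorphism $(1,1,s)$ of $T_B$. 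After an automorphism of $P_3(T_B)$, the cone is that of $P_3(x_\bullet)$, so $\operatorname{Cone}(P_3(x_\bullet))\in\mathcal N^{[3]}_\epsilon$.
\item Degreewise splitness makes this cone periodically homotopy equivalent to $P_3(T_M)$, whose comparison is $\Theta^{[3]}_0$.
\item Naturality of complete comparisons along the induced stable isomorphism $K_C\cong M$ then forces $(\Theta^{[3]}_\epsilon)_M=(\Theta^{[3]}_0)_M$. That is, $(1_M+\rho)\lambda=\lambda$ for an isomorphism $\lambda$, so $\rho=0$, a contradiction.
\end{enumerate}
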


This article is organised as follows. In Section 2, we recall the parametrisation of a pre-$(n+2)$-angulation due to Geiss--Keller--Oppermann and establish two compatibility lemmas needed later. In Section 3, we collect the ingredients from \cite{CLLZ} that will be used in the proof of our main results. In Section 4, we prove Proposition \ref{pre1}, which constructs a pre-$9$-angulation from the third power of a complete comparison. In Section 5, we prove Theorem \ref{th1} by showing that the resulting pre-$9$-angulation does not satisfy the higher mapping-cone axiom (N4).

\section{Preliminaries}

We first recall some basic definitions and results of (pre)$(n+2)$-angulated categories that will be
used throughout the paper.

Let $\C$ be an additive category equipped with an automorphism
$\Sigma\colon\C\rightarrow\C$, where $n$ is a positive integer.
An $(n+2)$-$\Sigma$-sequence in $\C$ is a sequence of objects and
morphisms
$$
A_0\xrightarrow{f_0}A_1\xrightarrow{f_1}A_2\xrightarrow{f_2}
\cdots\xrightarrow{f_{n-1}}A_n\xrightarrow{f_n}A_{n+1}
\xrightarrow{f_{n+1}}\Sigma A_0.
$$
Its \emph{left rotation} is the $(n+2)$-$\Sigma$-sequence
$$
A_1\xrightarrow{f_1}A_2\xrightarrow{f_2}A_3\xrightarrow{f_3}
\cdots\xrightarrow{f_n}A_{n+1}\xrightarrow{f_{n+1}}\Sigma A_0
\xrightarrow{(-1)^n\Sigma f_0}\Sigma A_1.
$$
The sequence is exact if the induced sequence
$$
\cdots\xrightarrow{}\C(-,A_0)\xrightarrow{}\C(-,A_1)\xrightarrow{}
\cdots\xrightarrow{}\C(-,A_{n+1})
\xrightarrow{}\C(-,\Sigma A_0)\xrightarrow{}
\cdots
$$
of representable functors $\C^{\mathrm{op}}\rightarrow\operatorname{Ab}$ is exact.

A \emph{morphism} between two $(n+2)$-$\Sigma$-sequences is a
collection of morphisms
$\varphi=(\varphi_0,\varphi_1,\ldots,\varphi_{n+1})$
such that the following diagram commutes:
$$
\xymatrix{
A_0 \ar[r]^{f_0}\ar[d]^{\varphi_0}
& A_1 \ar[r]^{f_1}\ar[d]^{\varphi_1}
& A_2 \ar[r]^{f_2}\ar[d]^{\varphi_2}
& \cdots \ar[r]^{f_n}
& A_{n+1} \ar[r]^{f_{n+1}}\ar[d]^{\varphi_{n+1}}
& \Sigma A_0 \ar[d]^{\Sigma\varphi_0}\\
B_0 \ar[r]^{g_0}
& B_1 \ar[r]^{g_1}
& B_2 \ar[r]^{g_2}
& \cdots \ar[r]^{g_n}
& B_{n+1} \ar[r]^{g_{n+1}}
& \Sigma B_0.
}
$$
Here each row is an $(n+2)$-$\Sigma$-sequence. Such a morphism is
called an \emph{isomorphism} if each
$\varphi_i$, $0\leqslant i\leqslant n+1$, is an isomorphism in $\C$.

\begin{definition}\cite[Definition 2.1]{GKO}\label{orig}
A \emph{pre-$(n+2)$-angulated category} is a triple
$(\C,\Sigma,\N)$, where $\C$ is an additive category,
$\Sigma$ is an automorphism of $\C$, called the $n$-suspension
functor, and $\N$ is a class of $(n+2)$-$\Sigma$-sequences,
whose elements are called $(n+2)$-angles, satisfying the following
axioms.

\begin{itemize}

\item[\textbf{(N1)}]
\begin{itemize}

\item[(a)]
The class $\N$ is closed under isomorphisms, direct sums and direct
summands.

\item[(b)]
For each object $A\in\C$, the trivial sequence
$$
A\xrightarrow{1_A}A\rightarrow0\rightarrow0\rightarrow
\cdots\rightarrow0\rightarrow\Sigma A
$$
belongs to $\N$.

\item[(c)]
Every morphism $f_0:A_0\rightarrow A_1$ in $\C$ can be extended to
an $(n+2)$-$\Sigma$-sequence
$$
A_0\xrightarrow{f_0}A_1\xrightarrow{f_1}A_2\xrightarrow{f_2}
\cdots\xrightarrow{f_{n-1}}A_n\xrightarrow{f_n}A_{n+1}
\xrightarrow{f_{n+1}}\Sigma A_0
$$
belonging to $\N$.

\end{itemize}

\item[\textbf{(N2)}]
An $(n+2)$-$\Sigma$-sequence belongs to $\N$ if and only if
its left rotation belongs to $\N$.

\item[\textbf{(N3)}]
Given a commutative diagram
$$
\xymatrix{
A_0 \ar[r]^{f_0}\ar[d]^{\varphi_0}
& A_1 \ar[r]^{f_1}\ar[d]^{\varphi_1}
& A_2 \ar[r]^{f_2}\ar@{-->}[d]^{\varphi_2}
& \cdots \ar[r]^{f_n}
& A_{n+1} \ar[r]^{f_{n+1}}\ar@{-->}[d]^{\varphi_{n+1}}
& \Sigma A_0 \ar[d]^{\Sigma\varphi_0}\\
B_0 \ar[r]^{g_0}
& B_1 \ar[r]^{g_1}
& B_2 \ar[r]^{g_2}
& \cdots \ar[r]^{g_n}
& B_{n+1} \ar[r]^{g_{n+1}}
& \Sigma B_0
}
$$
whose rows belong to $\N$, the dotted morphisms exist and complete
the diagram to a morphism of $(n+2)$-$\Sigma$-sequences.
\end{itemize}

In this case, the collection $\N$ is a pre-$(n+2)$-angulation of the category $\C$ (relative to the automorphism $\Sigma$). If the triplet $(\C,\Sigma,\N)$ moreover satisfies the following higher mapping-cone axiom, then it is called
an \emph{$(n+2)$-angulated category}.
\begin{itemize}
\item[\textbf{(N4)}]
In the situation of \textbf{(N3)}, the morphisms
$\varphi_2,\varphi_3,\ldots,\varphi_{n+1}$ can be chosen such that
the mapping cone
$$\hspace{-7mm}
A_1\oplus B_0
\xrightarrow{
\left(
\begin{smallmatrix}
-f_1&0\\
\varphi_1&g_0
\end{smallmatrix}
\right)}
A_2\oplus B_1
\xrightarrow{
\left(
\begin{smallmatrix}
-f_2&0\\
\varphi_2&g_1
\end{smallmatrix}
\right)}
\cdots
\xrightarrow{
\left(
\begin{smallmatrix}
-f_{n+1}&0\\
\varphi_{n+1}&g_n
\end{smallmatrix}
\right)}
\Sigma A_0\oplus B_{n+1}
\xrightarrow{
\left(
\begin{smallmatrix}
-\Sigma f_0&0\\
\Sigma\varphi_1&g_{n+1}
\end{smallmatrix}
\right)}
\Sigma A_1\oplus\Sigma B_0
$$
belongs to $\N$.

\end{itemize}
\end{definition}
\begin{remark}
Axiom (N4) is the original Geiss--Keller--Oppermann  higher mapping-cone axiom. Its equivalence with the higher octahedral axiom is proved by Bergh and Thaule \cite{bt2}. Cheng and Yang \cite{CY} introduced several further equivalent forms of (N4). We shall use their Axiom A in the following form.
\end{remark}

\begin{lemma}\rm\cite[Axiom $A$]{CY}\label{AX}
Let $(\C,\Sigma,\N)$ be a pre-$(n+2)$-angulated category and assume that it satisfies (N4).  Then the following commutative diagram with rows in $\N$
$$
\xymatrix{
A_0 \ar[r]^{f_0}\ar[d]^{\varphi_0}
& A_1 \ar[r]^{f_1}\ar[d]^{\varphi_1}
& \cdots \ar[r]^{f_{i-1}}
& A_i \ar[r]^{f_i}\ar[d]^{\varphi_i}
& A_{i+1} \ar[r]^{f_{i+1}}\ar@{-->}[d]^{\varphi_{i+1}}& \cdots \ar[r]^{f_n}& A_{n+1} \ar[r]^{f_{n+1}}\ar@{-->}[d]^{\varphi_{n+1}}
& \Sigma A_0 \ar[d]^{\Sigma\varphi_0}\\
B_0 \ar[r]^{g_0}
& B_1 \ar[r]^{g_1}
&  \cdots\ar[r]^{g_{i-1}}
& B_i  \ar[r]^{g_i}& B_{i+1}  \ar[r]^{g_{i+1}}& \cdots \ar[r]^{g_n}
& B_{n+1} \ar[r]^{g_{n+1}}
& \Sigma B_0
}
$$
for $1\leq i \leq n $ can be completed to a morphism of $(n+2)$-angles such that the mapping cone
$$
A_1\oplus B_0
\xrightarrow{
\left(
\begin{smallmatrix}
-f_1&0\\
\varphi_1&g_0
\end{smallmatrix}
\right)}
A_2\oplus B_1
\xrightarrow{
\left(
\begin{smallmatrix}
-f_2&0\\
\varphi_2&g_1
\end{smallmatrix}
\right)}
\cdots
\xrightarrow{
\left(
\begin{smallmatrix}
-f_{n+1}&0\\
\varphi_{n+1}&g_n
\end{smallmatrix}
\right)}
\Sigma A_0\oplus B_{n+1}
\xrightarrow{
\left(
\begin{smallmatrix}
-\Sigma f_0&0\\
\Sigma\varphi_1&g_{n+1}
\end{smallmatrix}
\right)}
\Sigma A_1\oplus\Sigma B_0
$$
belongs to $\N$.

\end{lemma}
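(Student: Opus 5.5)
The plan is to reduce to (N4) itself, which is the case $i=1$ of the statement, by a homotopy argument. I would correct the components $\varphi_2,\dots,\varphi_i$ one at a time. The key point is that a morphism of $(n+2)$-angles can be changed by an elementary homotopy without changing the isomorphism class of its mapping cone. I use freely that $(n+2)$-angles in a pre-$(n+2)$-angulated category are exact, which is \cite[Proposition 2.5(a)]{GKO}.

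\textbf{Step 1 (apply (N4)).} Apply (N4) to the square given by $(\varphi_0,\varphi_1)$. This gives a morphism of $(n+2)$-angles $\psi=(\varphi_0,\varphi_1,\psi_2,\dots,\psi_{n+1})$ whose mapping cone $C(\psi)$ lies in $\N$. If $i=1$ we are done, so assume $i\geq 2$.

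\textbf{Step 2 (induction on agreement).} Suppose we have a morphism $\psi$ with $C(\psi)\in\N$ and $\psi_l=\varphi_l$ for all $l<j$, where $2\le j\le i$.
\begin{itemize}
\item Put $d_j=\varphi_j-\psi_j\colon A_j\to B_j$. Then $d_jf_{j-1}=g_{j-1}(\varphi_{j-1}-\psi_{j-1})=0$.
\item The sequence $\C(A_{j+1},B_j)\to\C(A_j,B_j)\to\C(A_{j-1},B_j)$ is exact. Hence $d_j=h_jf_j$ for some $h_j\colon A_{j+1}\to B_j$.
\item Define $\psi'$ by $\psi'_j=\psi_j+h_jf_j=\varphi_j$ and $\psi'_{j+1}=\psi_{j+1}+g_jh_j$, with all other components unchanged.
\end{itemize}
Here $j+1\le n+1$, so $h_j$ never touches the last square, which involves $\Sigma\varphi_0$. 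The commutativity checks $\psi'_{j+1}f_j=g_j\psi'_j$ and $\psi'_{j+2}f_{j+1}=g_{j+1}\psi'_{j+1}$ are immediate. They use only $f_{j+1}f_j=0$ and $g_{j+1}g_j=0$; if $j+1=n+1$, the relevant composite is $\Sigma f_0\circ f_{n+1}=0$ instead.

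\textbf{Step 3 (invariance of the cone).} Show that $C(\psi')\cong C(\psi)$ as $(n+2)$-$\Sigma$-sequences. I would define an isomorphism which is the identity on every term $A_{l+1}\oplus B_l$ except the term $A_{j+1}\oplus B_j$. On that term it is a unipotent block matrix $\left(\begin{smallmatrix}1&0\\ \pm h_j&1\end{smallmatrix}\right)$.

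One then checks the two squares adjacent to this term.
\begin{itemize}
\item Entering square: the correction $h_jf_j$ in the lower-left entry of the differential is absorbed. Here the sign $-f_j$ in the cone dictates the sign in front of $h_j$.
\item Leaving square: the correction $g_jh_j$ is absorbed, using $g_j\cdot$ on the $B$-part.
\end{itemize}
Since $\N$ is closed under isomorphisms by (N1)(a), we get $C(\psi')\in\N$. Iterating for $j=2,\dots,i$ yields a completion extending the prescribed $\varphi_0,\dots,\varphi_i$ whose mapping cone lies in $\N$.

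\textbf{Main obstacle.} I expect the main difficulty to be the sign bookkeeping in Step 3. I need to choose the sign of the off-diagonal entry so that both adjacent squares commute simultaneously with the cone's differentials $\left(\begin{smallmatrix}-f_l&0\\ \psi_l&g_{l-1}\end{smallmatrix}\right)$. I would also check the boundary case $j+1=n+1$ carefully, where the next cone term is $\Sigma A_1\oplus\Sigma B_0$ and the differential involves $\Sigma\varphi_1$. That differential is unaffected since $\varphi_1$ is fixed throughout.
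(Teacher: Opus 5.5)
Your proof is correct, but it takes a different route from the paper, which gives no proof of Lemma~\ref{AX} at all. The paper quotes Axiom~A from Cheng--Yang, where it is one of several conditions shown to be equivalent to \textbf{(N4)}.

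You instead prove directly the one direction the paper uses (in Theorem~\ref{th1}, with $i=n=7$). You take the \textbf{(N4)}-completion $\psi$ of $(\varphi_0,\varphi_1)$. Then, for $j=2,\dots,i$, you replace $\psi_j,\psi_{j+1}$ by $\psi_j+h_jf_j=\varphi_j$ and $\psi_{j+1}+g_jh_j$, and observe that this elementary homotopy changes the cone only up to isomorphism.

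The citation buys the full equivalence, together with the other forms of \textbf{(N4)} in Cheng--Yang. Your route buys a short, self-contained argument using only (N1)--(N4) and matrix manipulations, which makes Theorem~\ref{th1} independent of that reference.

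The reduction to $i=1$ works because $j\geq 2$. The component $\psi_1$ enters two cone differentials: the first one, and the last one as $\Sigma\psi_1$. Since it is never modified, the comparison isomorphism of cones is the identity in degree $0$, and the final square is untouched.

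The sign you left open is $-h_j$. Take $\left(\begin{smallmatrix}1&0\\-h_j&1\end{smallmatrix}\right)$ on $A_{j+1}\oplus B_j$ and identities elsewhere; then
\[
\left(\begin{smallmatrix}1&0\\-h_j&1\end{smallmatrix}\right)\left(\begin{smallmatrix}-f_j&0\\ \psi_j&g_{j-1}\end{smallmatrix}\right)=\left(\begin{smallmatrix}-f_j&0\\ \psi_j+h_jf_j&g_{j-1}\end{smallmatrix}\right),\qquad \left(\begin{smallmatrix}-f_{j+1}&0\\ \psi_{j+1}+g_jh_j&g_j\end{smallmatrix}\right)\left(\begin{smallmatrix}1&0\\-h_j&1\end{smallmatrix}\right)=\left(\begin{smallmatrix}-f_{j+1}&0\\ \psi_{j+1}&g_j\end{smallmatrix}\right).
\]

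There are a few minor slips:
\begin{itemize}
\item The check at square $j+1$ uses $g_{j+1}g_j=0$. For $j=n$ this reads $g_{n+1}g_n=0$, not $\Sigma f_0\circ f_{n+1}=0$.
\item For $j=n$, the neighbour of the modified term $A_{n+1}\oplus B_n$ is $\Sigma A_0\oplus B_{n+1}$, not $\Sigma A_1\oplus\Sigma B_0$.
\item The factorisation $d_j=h_jf_j$ needs exactness of $\C(-,B_j)$ applied to the angle, which is dual to the notion of exactness defined in the paper. It follows at once from \textbf{(N3)}: apply it to $(0,d_j)$, from the rotated angle $A_{j-1}\to A_j\to A_{j+1}\to\cdots$ to the rotated trivial angle $0\to B_j\xrightarrow{1}B_j\to 0\to\cdots\to 0$.
\end{itemize}
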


\hspace{-5mm}{\bf 2.1 Complete comparisons and the Geiss--Keller--Oppermann parametrisation.}
~Let $\mathcal F$ be an additive category with split idempotents such that $\operatorname{mod}\mathcal F$, the category of finitely presented contravariant functors $\mathcal F^{\mathrm{op}}\to\operatorname{Ab}$, is Frobenius. We identify $\mathcal F$ with the full subcategory of projective-injective objects in $\operatorname{mod}\mathcal F$ via the Yoneda embedding. Write
$S=\Omega^{-1}$
for the suspension of the stable category $\underline{\operatorname{mod}}\mathcal F$.

Let $\Sigma$ be an automorphism of $\mathcal F$. It induces an exact automorphism, still denoted by $\Sigma$, of $\operatorname{mod}\mathcal F$ and hence a triangle automorphism of $\underline{\operatorname{mod}}\mathcal F$. Let
\[
X_\bullet:\quad X_0\xrightarrow{\alpha_0}X_1\xrightarrow{\alpha_1}\cdots\xrightarrow{\alpha_n}X_{n+1}\xrightarrow{\alpha_{n+1}}\Sigma X_0
\]
be an exact $(n+2)$-$\Sigma$-sequence in $\mathcal F$. Its \emph{first kernel} is
\[
K_X=\ker\bigl(\mathcal F(-,\alpha_0)\bigr)\in\operatorname{mod}\mathcal F.
\]
Following the complete comparison construction of \cite[Section 2]{GKO}, the sequence of representable functors determined by $X_\bullet$ is a finite part of a complete injective resolution of $K_X$. Comparing it with a fixed complete injective resolution and lifting the identity of $K_X$ gives a stable isomorphism
\begin{equation*}
\delta_{X_\bullet}:\Sigma K_X\xrightarrow{\sim}S^{n+2}K_X,
\end{equation*}
called the \emph{complete comparison} of $X_\bullet$. The comparison is independent of the auxiliary choices in the stable category.

We use the following form of the Heller--Geiss--Keller--Oppermann parametrisation. In the triangle-functor structure on $S^{n+2}$ one includes the usual sign $(-1)^{n+2}$ from \cite{GKO} over $\mathbb F_2$, which is the only field used in Sections 3--5, this sign is invisible.

\begin{theorem}\rm\cite[Lemma 2.3 and Proposition 2.4]{GKO}\label{mai}
Let
\[
\theta:\Sigma\xrightarrow{\sim}S^{n+2}
\]
be an isomorphism of triangle functors. Then the class $\mathcal N_\theta$ of all exact $(n+2)$-$\Sigma$-sequences $X_\bullet$ satisfying
$
\delta_{X_\bullet}=\theta_{K_X}
$
is a pre-$(n+2)$-angulation of $(\mathcal F,\Sigma)$. Conversely, every pre-$(n+2)$-angulation on $(\mathcal F,\Sigma)$ is obtained uniquely in this way.
\end{theorem}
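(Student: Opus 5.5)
The statement just above is the Geiss--Keller--Oppermann parametrisation, which the paper cites from \cite{GKO}. I would reprove it in two directions, working throughout in the stable category $\underline{\operatorname{mod}}\mathcal F$, where $\delta_{X_\bullet}$ is well defined.

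\emph{$\mathcal N_\theta$ is a pre-$(n+2)$-angulation.}

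For (N1)(a), I would first show that $X_\bullet\mapsto(K_X,\delta_{X_\bullet})$ is natural. A morphism of exact $(n+2)$-$\Sigma$-sequences $\varphi$ induces $\kappa:K_X\to K_Y$. The comparison maps lift $\Sigma\kappa$ and $S^{n+2}\kappa$ through complete resolutions, so by uniqueness of lifts up to homotopy we get
\[
S^{n+2}\kappa\circ\delta_{X_\bullet}=\delta_{Y_\bullet}\circ\Sigma\kappa .
\]
Hence $\delta$ is compatible with isomorphisms, and additivity of comparisons gives closure under direct sums and summands, since $\theta$ is natural and additive.

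For (N1)(b), the trivial sequence has $K=0$, so the condition is empty.

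For (N1)(c), given $f_0:A_0\to A_1$, put $K=\ker\mathcal F(-,f_0)$. Continue $\mathcal F(-,f_0)$ to an injective coresolution $\mathcal F(-,A_1)\to\cdots\to\mathcal F(-,A_{n+1})$. Its cokernel $C$ is stably $S^{n+2}K$, and via $\theta_K^{-1}$ it is stably isomorphic to $\Sigma K$. I would then choose the final map to $\Sigma A_0$ so that the resulting comparison equals $\theta_K$. This works because any stable automorphism is realised by adjusting the last morphism, adding projective-injective summands if needed.

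For (N2), I would use that rotation replaces $K_X$ by its cosyzygy while the comparison is transported by $S$. The condition is preserved precisely because $\theta$ is a morphism of triangle functors, i.e.\ commutes with $S$ up to the prescribed sign $(-1)^{n+2}$.

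For (N3), $\varphi_0,\varphi_1$ induce $\kappa$ on kernels. Injectivity of representables lets one extend to a chain map of the resolutions. The last square commutes modulo the stable ambiguity, which is killed by the naturality identity above together with $\delta=\theta$ on both rows.

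\emph{Every pre-$(n+2)$-angulation $\mathcal N$ arises this way.}

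For each $K$, I would realise $K$ as $K_X$ for some $X_\bullet\in\mathcal N$, using (N1)(c) on a projective presentation, and define $\theta_K=\delta_{X_\bullet}$.

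Independence of the choice of $X_\bullet$ and naturality of $\theta$ both follow from (N3) plus the naturality identity. Compatibility with $S$ comes from (N2), and additivity from (N1)(a).

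Finally I would show $\mathcal N=\mathcal N_\theta$. Given $Y_\bullet\in\mathcal N_\theta$, take $X_\bullet\in\mathcal N$ with the same $f_0$. By (N3) there is a morphism $X_\bullet\to Y_\bullet$ that is the identity on $K$; a five-lemma argument on exact sequences of representables then shows it is an isomorphism modulo trivial summands. Since both classes are closed under isomorphism and summands, $Y_\bullet\in\mathcal N$.

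Uniqueness of $\theta$ holds because $\theta$ is determined by the values $\delta_{X_\bullet}$ on the sequences in $\mathcal N$.

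\textbf{Main obstacle.} I expect the hardest step to be (N1)(c) together with the "isomorphism modulo trivial summands" step. Both require showing that every stable automorphism of $\Sigma K$ can be realised by modifying the final morphism $f_{n+1}$, and this needs careful handling of projective-injective summands. I would handle it first, by writing any stable endomorphism as an honest map plus a map factoring through an injective, and absorbing the latter into a change of $f_{n+1}$.
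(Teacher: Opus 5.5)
The paper gives no proof of this statement; it is quoted from Geiss--Keller--Oppermann, who extend Heller's argument. Your outline follows that Heller-type route, but two steps do not go through as described.

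\textbf{The main gap.} The last step of your converse direction fails for $n\geq 2$, which is the case the paper needs ($n=7$). When only $\varphi_0,\varphi_1$ are known to be isomorphisms, the five lemma gives no control over $\varphi_2,\dots,\varphi_{n+1}$. These components genuinely need not be invertible: adding to $X_\bullet$ a rotated trivial $(n+2)$-angle $B\xrightarrow{1}B$ in degrees $2,3$ leaves $f_0$ unchanged.

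What you need instead is a homotopy argument for $(n+2)$-$\Sigma$-periodic complexes. Use (N3) for $\mathcal N_\theta$, not for $\mathcal N$, since $Y_\bullet$ is not yet known to lie in $\mathcal N$. This gives $\varphi\colon X_\bullet\to Y_\bullet$ and $\psi\colon Y_\bullet\to X_\bullet$ which are the identity in degrees $0,1$. Then $g=\varphi\psi-1$ induces $0$ on $K$, so it is null-homotopic as a map of complete resolutions. A priori, however, the homotopy need not be compatible with $\Sigma$.

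Such maps form a square-zero ideal up to periodic homotopy. Indeed, if $g=dh+hd$ and $g'=dh'+h'd$, then $g'g=d(g'h)+(g'h)d$, and the failure of $g'h$ to be periodic is $g'(h-\gamma h)$, which is the boundary of $h'(h-\gamma h)$; here $\gamma$ is the periodicity shift. So after replacing $\psi$ by $\psi(1-g)$ you get $1_{Y_\bullet}=\varphi\psi+dh+hd$ with $h$ periodic. The term $dh+hd$ factors through the periodic cone of $1_{Y_\bullet}$. That cone is a direct sum of rotations of trivial $(n+2)$-angles, so it lies in $\mathcal N$ by (N1)(b) and (N2). Hence $Y_\bullet$ is a direct summand of $X_\bullet\oplus\operatorname{Cone}(1_{Y_\bullet})\in\mathcal N$.

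Two smaller points in this direction:
\begin{itemize}
\item You need the standard fact that every $(n+2)$-angle of a pre-$(n+2)$-angulation is exact before $\delta_{X_\bullet}$ is even defined for $X_\bullet\in\mathcal N$.
\item $K$ should be realised as a first kernel through an injective copresentation $0\to K\to\mathcal F(-,A_0)\to\mathcal F(-,A_1)$, not a projective presentation.
\end{itemize}

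\textbf{The (N1)(c) step.} Exactness at the end requires an honest isomorphism $\operatorname{coker}\mathcal F(-,f_n)\cong\Sigma K$, not a stable one. Changing $f_{n+1}$ alone while keeping exactness only realises those stable automorphisms of $\Sigma K$ that are represented by genuine automorphisms. Adding to $f_{n+1}$ a map that factors through an injective does not change the stable class at all. So your mechanism does not reach an arbitrary stable automorphism outside a Krull--Schmidt situation.

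The clean device is a pullback:
\begin{itemize}
\item Let $0\to C_n\to I_{n+1}\xrightarrow{p}S^{n+2}K\to 0$ be the last step of an injective coresolution of $K$ beginning with $\mathcal F(-,f_0)$.
\item Let $t\colon\Sigma K\to S^{n+2}K$ be a module map representing $\theta_K$, and put $E=I_{n+1}\times_{S^{n+2}K}\Sigma K$.
\item Since $(p,-t)\colon I_{n+1}\oplus\Sigma K\to S^{n+2}K$ is an epimorphism and a stable isomorphism, its kernel $E$ is projective-injective. It is therefore representable, because idempotents split in $\mathcal F$.
\item The sequence $0\to C_n\to E\to\Sigma K\to 0$ then yields $A_{n+1}$, $f_n$, $f_{n+1}$ with comparison exactly $\theta_K$.
\end{itemize}

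\textbf{A similar point in (N3).} The naturality identity only shows that the last square commutes stably. You still have to correct $\varphi_{n+1}$ by a term $c\circ f_{n+1}$ with $c\colon\Sigma A_0\to B_{n+1}$. This is possible because a map out of $\Sigma K_X$ that factors through an injective factors through $\Sigma K_X\hookrightarrow\mathcal F(-,\Sigma A_0)$.
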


The next elementary lemma replaces a direct-sum assertion which is generally false for a merely degreewise split sequence of complexes. We use the terminology of \cite[Section 1.3]{GKO}: an \emph{$(n+2)$-$\Gamma$-periodic complex} is a complex $X_\bullet$ together with the periodic identifications
\[
X_{i+n+2}=\Gamma X_i,~d_{i+n+2}=\Gamma(d_i)
\]
for all $i$, and morphisms and homotopies are required to respect these identifications.

\begin{lemma}\label{333}
Let $\Gamma$ be an automorphism of $\mathcal F$ inducing an exact automorphism of $\operatorname{mod}\mathcal F$, and let
\begin{equation}\label{eq:split-complexes}
0\longrightarrow X_\bullet\xrightarrow{x_\bullet}Y_\bullet\xrightarrow{q_\bullet}Z_\bullet\longrightarrow0
\end{equation}
be a degreewise split short exact sequence of exact $(n+2)$-$\Gamma$-periodic complexes of projective-injective objects. Use the mapping-cone convention of \textbf{(N4)}, so that the degree-$i$ term is $X_{i+1}\oplus Y_i$. Then the canonical chain map
\[
p\colon \operatorname{Cone}(x_\bullet)\longrightarrow Z_\bullet,
~p_i=(0,q_i),
\]
is a periodic homotopy equivalence. If $K_C$ and $K_Z$ denote the first kernels of $\operatorname{Cone}(x_\bullet)$ and $Z_\bullet$, respectively, and
$
u\colon K_C\xrightarrow{\sim}K_Z
$
is the stable isomorphism induced by $p$, then their complete comparisons satisfy
\begin{equation}\label{eq:comparison-naturality}
S^{n+2}(u)\circ\delta_{\operatorname{Cone}(x_\bullet)}
=\delta_{Z_\bullet}\circ\Gamma(u)
\end{equation}
in $\underline{\operatorname{mod}}\mathcal F$.
\end{lemma}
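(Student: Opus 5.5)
Two things have to be shown: first that $p$ is a periodic homotopy equivalence, and then that the comparison formula \eqref{eq:comparison-naturality} holds. The plan is to build an explicit periodic homotopy inverse for $p$, and then get the formula from the fact that complete comparisons are natural with respect to periodic chain maps.

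\emph{Step 1: $p$ is a chain map.} Using the (N4) sign convention, the differential of $\operatorname{Cone}(x_\bullet)$ in degree $i$ is
\[
\begin{pmatrix}-d^X_{i+1}&0\\ x_{i+1}&d^Y_i\end{pmatrix}\colon X_{i+1}\oplus Y_i\to X_{i+2}\oplus Y_{i+1}.
\]
Then $q_{i+1}x_{i+1}=0$ and $q_{i+1}d^Y_i=d^Z_iq_i$ together give $d^Z_i p_i=p_{i+1}\,d^{C}_i$.

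\emph{Step 2: a homotopy inverse.} In each degree choose a splitting, i.e.\ sections $s_i\colon Z_i\to Y_i$ and retractions $r_i\colon Y_i\to X_i$ with $r_ix_i=1$, $q_is_i=1$ and $x_ir_i+s_iq_i=1$. To keep everything periodic, choose these only for $0\le i\le n+1$ and transport them by $\Gamma$ to all other degrees, so that $s_{i+n+2}=\Gamma s_i$ and likewise for $r$. The failure of $s$ to be a chain map is measured by
\[
t_i=r_{i+1}\bigl(d^Y_is_i-s_{i+1}d^Z_i\bigr)\colon Z_i\to X_{i+1}.
\]
This satisfies $x_{i+1}t_i=d^Y_is_i-s_{i+1}d^Z_i$, because applying $q$ to the bracket gives $0$. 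The candidate inverse is
\[
j_i=\begin{pmatrix}\pm t_i\\ s_i\end{pmatrix}\colon Z_i\to X_{i+1}\oplus Y_i,
\]
with the sign fixed so that $j$ is a chain map. One then checks that $pj=1$, and that $1-jp$ equals $dh+hd$ for the homotopy $h_i=\begin{pmatrix}0&r_i\\0&0\end{pmatrix}$ (up to sign), from $X_{i+1}\oplus Y_i$ to $X_i\oplus Y_{i-1}$, corrected by a term built from $t$. Both $j$ and $h$ are constructed from periodically chosen data, so they respect the identifications with $\Gamma$.

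This bookkeeping is where I expect the main difficulty. Checking that $j$ is a chain map and that the homotopy identity holds uses $d^2=0$ on $Y$ and $Z$ and requires getting the signs right. One also has to confirm that periodicity is not broken across the wrap-around from degree $n+1$ to degree $n+2$. That follows because $\Gamma$ is additive and $d_{i+n+2}=\Gamma(d_i)$, so $t_{i+n+2}=\Gamma t_i$.

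\emph{Step 3: the comparison formula.} A periodic homotopy equivalence $p$ induces a map on first kernels $K_C\to K_Z$. The first kernel is the syzygy of the tail of each complex, so this map is a stable isomorphism $u$. The complete comparison $\delta$ is obtained by lifting the identity of the first kernel along a comparison with a fixed complete injective resolution, and such lifts are unique up to homotopy. Composing the lift for $\operatorname{Cone}(x_\bullet)$ with $p$ gives a lift of $u$ into the resolution of $K_Z$. Because $p$ commutes with the periodic identification $\Gamma$, evaluating this composite after one period gives $\Gamma(u)$ on one side and $S^{n+2}(u)$ on the other. Uniqueness of lifts in the stable category then yields
\[
S^{n+2}(u)\circ\delta_{\operatorname{Cone}(x_\bullet)}=\delta_{Z_\bullet}\circ\Gamma(u),
\]
which is \eqref{eq:comparison-naturality}. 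This is the functoriality of the construction in \cite[Section 2]{GKO}, and periodicity of $p$ is exactly what is needed to apply it.
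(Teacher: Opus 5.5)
Your proof is correct. Step 3 is the same argument the paper uses: $p$ induces $u$ on first kernels, periodicity turns this into $\Gamma(u)$ one period later, shifting the lift gives $S^{n+2}(u)$, and lifts are unique up to homotopy.

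The difference is Step 2. The paper never writes down an inverse. It identifies $\ker p$ with $\operatorname{Cone}(1_{X_\bullet})$, which is contractible compatibly with periodicity. It then notes that $0\to\operatorname{Cone}(1_{X_\bullet})\to\operatorname{Cone}(x_\bullet)\xrightarrow{p}Z_\bullet\to0$ is degreewise split, with splittings chosen on one period and transported by $\Gamma$. From this it concludes that $p$ is an isomorphism in the homotopy category of periodic complexes, since the sequence gives a triangle there whose third vertex is zero.

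Your route is more elementary. It does not need the triangulated structure of the periodic homotopy category, and the bookkeeping you were worried about is clean:
\begin{itemize}
\item The sign is forced to be minus, $j_i=\bigl(\begin{smallmatrix}-t_i\\ s_i\end{smallmatrix}\bigr)$. The $Y$-component of the chain-map condition is exactly $x_{i+1}t_i=d^Y_is_i-s_{i+1}d^Z_i$. The $X$-component, $d^X_{i+1}t_i+t_{i+1}d^Z_i=0$, follows after composing with the monomorphism $x_{i+2}$.
\item With this sign, $pj=1$ holds on the nose.
\item Since $r_{i+1}s_{i+1}=0$, one gets $t_iq_i=r_{i+1}d^Y_i-d^X_ir_i$. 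Hence $1-jp=dh+hd$ holds exactly for $h_i=\bigl(\begin{smallmatrix}0&r_i\\ 0&0\end{smallmatrix}\bigr)$, with no correction term.
\end{itemize}

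The explicit inverse also gives a sharper reason why $u$ is a stable isomorphism than your remark about syzygies of the tail. Let $u'\colon K_Z\to K_C$ be the map induced by $j$. Then $uu'=1$ exactly. Moreover $1-u'u$ is the restriction of $d^C_{-1}h_0$ to $K_C$, so it factors through the projective-injective $\mathcal F(-,C_{-1})$ and vanishes stably.

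The paper's argument is shorter and more conceptual. Yours makes the periodicity of both the inverse and the homotopy completely visible.
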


\begin{proof}
The kernel of $p$ is canonically isomorphic, as a complex, to $\operatorname{Cone}(1_{X_\bullet})$ in the same convention. Indeed, in degree $i$ one has
\[
\ker p_i=X_{i+1}\oplus x_i(X_i)\cong X_{i+1}\oplus X_i,
\]
and, under this identification, the restricted differential is
\[
(u,v)\longmapsto(-d_Xu,\,u+d_Xv),
\]
which is precisely the differential of the cone of the identity. Thus \eqref{eq:split-complexes} induces a degreewise split short exact sequence
\[
0\longrightarrow\operatorname{Cone}(1_{X_\bullet})
\longrightarrow\operatorname{Cone}(x_\bullet)
\xrightarrow{p}Z_\bullet\longrightarrow0.
\]
The complex $\operatorname{Cone}(1_{X_\bullet})$ is contractible, and its standard contracting homotopy is compatible with the $(n+2)$-$\Gamma$-periodicity. Since \eqref{eq:split-complexes} is degreewise split, splittings may be chosen on one period and extended by $\Gamma$. Hence the preceding short exact sequence yields a triangle in the homotopy category of $(n+2)$-$\Gamma$-periodic complexes, and $p$ is an isomorphism there. Thus $p$ is a periodic homotopy equivalence; compare \cite[Section 1.3]{GKO}.

It remains to record carefully the effect on complete comparisons. The chain map $p$ induces a morphism $u:K_C\to K_Z$ on first kernels. Since a homotopy inverse of $p$ induces an inverse to $u$ in the stable category, $u$ is a stable isomorphism. Choose complete injective resolutions $I_C$ and $I_Z$ of $K_C$ and $K_Z$. By the comparison theorem, $u$ lifts to a chain map
\[
U:I_C\longrightarrow I_Z,
\]
unique up to homotopy. The $(n+2)$-$\Gamma$-periodicity of $p$ means that, after one period, the induced map on the corresponding first kernels is $\Gamma(u)$. On the other hand, shifting the chosen lift by $n+2$ degrees induces $S^{n+2}(u)$ on the $(n+2)$nd cosyzygies. Consequently the two composites
\[
\Gamma K_C\longrightarrow S^{n+2}K_Z
\]
obtained respectively by first applying the complete comparison of $\operatorname{Cone}(x_\bullet)$ and then $S^{n+2}(u)$, or by first applying $\Gamma(u)$ and then the complete comparison of $Z_\bullet$, are represented by comparison maps lifting the same morphism through complete injective resolutions. Such lifts are unique up to homotopy. Therefore the square
\[
\xymatrix@C=44pt{
\Gamma K_C \ar[r]^{\delta_{\operatorname{Cone}(x_\bullet)}} \ar[d]_{\Gamma u}
& S^{n+2}K_C \ar[d]^{S^{n+2}u}\\
\Gamma K_Z \ar[r]_{\delta_{Z_\bullet}}
& S^{n+2}K_Z
}
\]
commutes in $\underline{\operatorname{mod}}\mathcal F$, which is exactly \eqref{eq:comparison-naturality}.
\end{proof}

\hspace{-5mm}{\bf 2.2~ The third power of a complete comparison.}~
We now consider the triangulated case, corresponding to $n=1$. Let
\[
\theta\colon \Sigma\xrightarrow{\sim}S^3
\]
be an isomorphism of triangle functors. Define a natural isomorphism
\begin{equation}\label{eq:third-power}
(\Theta_\theta^{[3]})_X
=S^6(\theta_X)\circ S^3(\theta_{\Sigma X})\circ\theta_{\Sigma^2X}\colon
\Sigma^3X\xrightarrow{\sim}S^9X.
\end{equation}
Since it is a composite of isomorphisms of triangle functors and their iterates, $\Theta_\theta^{[3]}\colon\Sigma^3\xrightarrow{\sim}S^9$ is again an isomorphism of triangle functors. Hence Theorem \ref{mai} determines a pre-$9$-angulation on $(\mathcal F,\Sigma^3)$.

If
\[
T:\quad X_0\longrightarrow X_1\longrightarrow X_2\longrightarrow\Sigma X_0
\]
is an exact $3$-$\Sigma$-sequence, write
\begin{equation*}
\begin{split}
P_3(T):\quad
X_0&\longrightarrow X_1\longrightarrow X_2\longrightarrow\Sigma X_0
\longrightarrow\Sigma X_1\longrightarrow\Sigma X_2\\
&\longrightarrow\Sigma^2X_0\longrightarrow\Sigma^2X_1\longrightarrow\Sigma^2X_2
\longrightarrow\Sigma^3X_0
\end{split}
\end{equation*}
for its threefold concatenation.

\begin{lemma}\label{lem:concatenation}
Assume that $\operatorname{mod}\mathcal F$ is $\mathbb F_2$-linear. If $T\in\mathcal N_\theta$, then $P_3(T)$ belongs to the pre-$9$-angulation determined by $\Theta_\theta^{[3]}$. More precisely, if $K$ is the first kernel of $T$, then the first kernel of $P_3(T)$ is $K$ and
\[
\delta_{P_3(T)}=(\Theta_\theta^{[3]})_K.
\]
\end{lemma}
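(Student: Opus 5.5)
Write $T$ as $X_0\xrightarrow{\alpha_0}X_1\xrightarrow{\alpha_1}X_2\xrightarrow{\alpha_2}\Sigma X_0$. The plan is to read off the complete comparison of $P_3(T)$ directly from one explicit complete injective resolution built out of $T$ itself.

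First, the objects and maps of $P_3(T)$ are those of $T$, $\Sigma T$ and $\Sigma^2T$ laid end to end. Up to the signs in the rotation, the maps are $\alpha_0,\alpha_1,\alpha_2,\Sigma\alpha_0,\dots,\Sigma^2\alpha_2$; since $\operatorname{mod}\mathcal F$ is $\mathbb F_2$-linear, all signs $(-1)^n$ are trivial. The first map of $P_3(T)$ is $\alpha_0$, so its first kernel is $\ker\mathcal F(-,\alpha_0)=K$. Exactness of $P_3(T)$ follows from exactness of $T$ and of its $\Sigma$-translates, because the sequence of representables of $P_3(T)$ is literally the same long exact sequence as that of $T$.

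Next I compute the comparison. Splice the sequences of representables of $T$, $\Sigma T$, $\Sigma^2T,\dots$ to form a complete injective resolution $I$ of $K$. This is legitimate because $\Sigma$ is exact on $\operatorname{mod}\mathcal F$ and preserves projective-injectives. In $I$, the cosyzygy $S^3K$ sits as the image of $\mathcal F(-,\alpha_2)$, and the comparison $\delta_T=\theta_K\colon\Sigma K\to S^3K$ is the identification of the first kernel of $\Sigma T$ with $S^3K$, computed relative to a fixed resolution. Applying $\Sigma$ to this construction shows that the segment coming from $\Sigma T$ has first kernel $\Sigma K$ and comparison $\delta_{\Sigma T}=\Sigma(\theta_K)$. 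Since $\mathcal N_\theta$ is $\Sigma$-stable by (N2), we also have $\delta_{\Sigma T}=\theta_{\Sigma K}$. Similarly $\delta_{\Sigma^2T}=\theta_{\Sigma^2K}$.

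Finally, $\delta_{P_3(T)}\colon\Sigma^3K\to S^9K$ is obtained by comparing the $9$-term segment with the fixed resolution, and this factors through the intermediate cosyzygies $S^3$ and $S^6$. Concretely, the last third yields $\theta_{\Sigma^2K}\colon\Sigma^3K\to S^3\Sigma^2K$. The middle third, shifted by $3$ through the resolution, yields $S^3(\theta_{\Sigma K})$. The first third, shifted by $6$, yields $S^6(\theta_K)$. Uniqueness of lifts up to homotopy, the same argument as in Lemma~\ref{333}, justifies this factorisation. It also justifies replacing $\Sigma S^3$ by $S^3\Sigma$ via the triangle-functor structure of $\Sigma$; that structure is the identity here because $\Sigma$ is induced from an automorphism of $\mathcal F$, and over $\mathbb F_2$ no sign appears. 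The composite is exactly $(\Theta^{[3]}_\theta)_K$ from \eqref{eq:third-power}, so $P_3(T)$ lies in the pre-$9$-angulation of Theorem~\ref{mai}.

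The main obstacle is this final factorisation. One must check that comparison maps compose along concatenation: the comparison of a concatenated segment equals $S^{m}$ of the first comparison composed with the later one. One must also check that the commutation isomorphism $\Sigma S\cong S\Sigma$ enters compatibly with how $\theta_{\Sigma K}$ relates to $\Sigma\theta_K$, using that $\theta$ is a morphism of triangle functors. I would verify both by writing explicit chain maps between spliced resolutions, one period at a time.
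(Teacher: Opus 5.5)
Your proposal is correct and follows the same route as the paper. You identify the first kernel of $P_3(T)$ as $K$ by splicing, use (N2) three times (all rotation signs are trivial over $\mathbb F_2$) to get $\Sigma T,\Sigma^2T\in\mathcal N_\theta$ with comparisons $\theta_{\Sigma K},\theta_{\Sigma^2K}$, and factor the nine-step comparison through the intermediate cosyzygies $\Sigma K$ and $\Sigma^2K$ using uniqueness of comparison maps up to homotopy.

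Your side claim $\delta_{\Sigma T}=\Sigma(\theta_K)$ holds only after composing with the canonical isomorphism $\Sigma S^3\cong S^3\Sigma$, which need not be literally the identity. Nothing depends on it, because the factorisation uses only comparisons taken relative to the fixed resolutions of $K$, $\Sigma K$ and $\Sigma^2K$.
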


\begin{proof}
Because $\Sigma$ induces an exact automorphism of $\operatorname{mod}\mathcal F$, the first kernels of $T$, $\Sigma T$, and $\Sigma^2T$ are naturally identified with
$
K, \Sigma K,\Sigma^2K,
$
respectively. The exact sequence of representable functors associated with $P_3(T)$ is obtained by splicing the resolution segment for $T$ with the corresponding segments for $\Sigma T$ and $\Sigma^2T$. Hence the first kernel of the concatenated sequence is still $K$.

Since $T\in\mathcal N_\theta$, its complete comparison is $\theta_K$. Moreover, $\mathcal N_\theta$ is a pre-$3$-angulation, so axiom \textbf{(N2)} applied three times shows that the third left rotation of $T$ again belongs to $\mathcal N_\theta$. Over $\mathbb F_2$ all rotation signs are trivial, and this third rotation is exactly the translated sequence $\Sigma T$. Repeating the same argument gives $\Sigma^2T\in\mathcal N_\theta$. Consequently, under the natural identifications of their first kernels, the complete comparisons of the two translated blocks are $\theta_{\Sigma K}$ and $\theta_{\Sigma^2K}$. Here we use the triangle-functor structures in the Heller--Geiss--Keller--Oppermann parametrisation. In general their iterates carry the usual signs coming from the suspension; over $\mathbb F_2$ these signs are trivial, so no additional sign occurs in the splicing below. Reading the spliced complete resolution from right to left, the resulting nine-step comparison is therefore
\[
\Sigma^3K
\xrightarrow{\ \theta_{\Sigma^2K}\ }
S^3\Sigma^2K
\xrightarrow{\ S^3(\theta_{\Sigma K})\ }
S^6\Sigma K
\xrightarrow{\ S^6(\theta_K)\ }
S^9K.
\]
The order of the three factors is forced by the order of the three concatenated blocks. By uniqueness of comparison morphisms up to homotopy, this composite is precisely the complete comparison of $P_3(T)$. Thus
\[
\delta_{P_3(T)}
=S^6(\theta_K)\circ S^3(\theta_{\Sigma K})\circ\theta_{\Sigma^2K}
=(\Theta_\theta^{[3]})_K,
\]
as required.
\end{proof}

The following observation will be the mechanism which preserves the obstruction of \cite{CLLZ} under the third power.

\begin{lemma}\label{lem:odd-twist}
Let $\epsilon:\operatorname{Id}\xrightarrow{\sim}\operatorname{Id}$ be a natural automorphism of $\underline{\operatorname{mod}}\mathcal F$ which commutes with $S$ and satisfies $\epsilon^2=1$. Let $\theta_0:\Sigma\xrightarrow{\sim}S^3$ be an isomorphism of triangle functors and set
\[
(\theta_\epsilon)_X=\epsilon_{S^3X}\circ(\theta_0)_X.
\]
Let $\Theta_0^{[3]}$ and $\Theta_\epsilon^{[3]}$ be the natural isomorphisms defined by \eqref{eq:third-power}. Then
\begin{equation}\label{eq:odd-twist}
(\Theta_\epsilon^{[3]})_X
=\epsilon_{S^9X}\circ(\Theta_0^{[3]})_X.
\end{equation}
Equivalently, if
$
(\lambda_*^{[3]})_X
:=\Omega^9\bigl((\Theta_*^{[3]})_X\bigr):
\Omega^9\Sigma^3X\longrightarrow X,
~ *=0,\epsilon,
$
then
\begin{equation}\label{eq:shifted-odd-twist}
(\lambda_\epsilon^{[3]})_X
=\epsilon_X\circ(\lambda_0^{[3]})_X.
\end{equation}
\end{lemma}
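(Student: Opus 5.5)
The plan is to expand the definition \eqref{eq:third-power} for $\theta_\epsilon$ and move the three copies of $\epsilon$ to the far left. Only naturality of $\epsilon$, its commutation with $S$, and $\epsilon^2=1$ are needed. By definition,
\[
(\Theta_\epsilon^{[3]})_X=S^6\bigl(\epsilon_{S^3X}\circ(\theta_0)_X\bigr)\circ S^3\bigl(\epsilon_{S^3\Sigma X}\circ(\theta_0)_{\Sigma X}\bigr)\circ\epsilon_{S^3\Sigma^2X}\circ(\theta_0)_{\Sigma^2X}.
\]
Since $\epsilon$ commutes with $S$, we have $S^m(\epsilon_Y)=\epsilon_{S^mY}$ for all $m$. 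Hence $S^6(\epsilon_{S^3X})=\epsilon_{S^9X}$ and $S^3(\epsilon_{S^3\Sigma X})=\epsilon_{S^6\Sigma X}$. The expression becomes
\[
\epsilon_{S^9X}\circ S^6(\theta_0)_X\circ\epsilon_{S^6\Sigma X}\circ S^3(\theta_0)_{\Sigma X}\circ\epsilon_{S^3\Sigma^2X}\circ(\theta_0)_{\Sigma^2X}.
\]

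Next, naturality of $\epsilon$ as an automorphism of the identity functor lets it pass through any morphism: $g\circ\epsilon_Y=\epsilon_Z\circ g$ for every $g\colon Y\to Z$. Applying this with $g=S^6(\theta_0)_X$ moves $\epsilon_{S^6\Sigma X}$ to the left, where it becomes $\epsilon_{S^9X}$. Similarly, $\epsilon_{S^3\Sigma^2X}$ passes through $S^3(\theta_0)_{\Sigma X}$ and then $S^6(\theta_0)_X$, also becoming $\epsilon_{S^9X}$. This yields
\[
(\Theta_\epsilon^{[3]})_X=\epsilon_{S^9X}^3\circ(\Theta_0^{[3]})_X,
\]
and $\epsilon^2=1$ gives $\epsilon^3=\epsilon$, which is \eqref{eq:odd-twist}.

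For \eqref{eq:shifted-odd-twist}, apply $\Omega^9$ to \eqref{eq:odd-twist}. Since $\epsilon$ commutes with $S$, it commutes with $\Omega=S^{-1}$, so $\Omega^9(\epsilon_{S^9X})=\epsilon_{\Omega^9S^9X}=\epsilon_X$ after the canonical identification $\Omega^9S^9X=X$. Functoriality of $\Omega^9$ then gives $(\lambda_\epsilon^{[3]})_X=\epsilon_X\circ(\lambda_0^{[3]})_X$.

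The only delicate point is what ``commutes with $S$'' means. I take it as the equality $S(\epsilon_Y)=\epsilon_{SY}$ of natural transformations $S\to S$, compatible with the chosen identification $\Omega S=\mathrm{Id}$. If $S$ is only an autoequivalence, I would fix the quasi-inverse $\Omega$ and note that $\Omega(\epsilon_{SY})$ corresponds to $\epsilon_Y$ by naturality of $\epsilon$ with respect to the unit isomorphism $\Omega S\cong\mathrm{Id}$. Everything else is routine interchange.
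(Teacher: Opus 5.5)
Your proof is correct and follows the paper's argument. You expand with $S^m(\epsilon_Y)=\epsilon_{S^mY}$, slide the $\epsilon$-factors to the left by naturality, reduce using $\epsilon^2=1$, and then apply $\Omega^9$; the only difference is that the paper cancels one pair of $\epsilon$'s mid-computation instead of collecting $\epsilon^3_{S^9X}$, which is immaterial.
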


\begin{proof}
Since $\epsilon$ commutes with $S$, expanding the definition gives
\[
(\Theta_\epsilon^{[3]})_X
=\epsilon_{S^9X}S^6((\theta_0)_X)\,
\epsilon_{S^6\Sigma X}S^3((\theta_0)_{\Sigma X})\,
\epsilon_{S^3\Sigma^2X}(\theta_0)_{\Sigma^2X}.
\]
For brevity put
\[
f_1=S^6((\theta_0)_X),~
f_2=S^3((\theta_0)_{\Sigma X}),~
f_3=(\theta_0)_{\Sigma^2X}.
\]
Naturality of $\epsilon$ gives
\[
\epsilon_{S^9X}f_1=f_1\epsilon_{S^6\Sigma X},
~
\epsilon_{S^6\Sigma X}f_2=f_2\epsilon_{S^3\Sigma^2X}.
\]
Consequently,
\[
\begin{aligned}
&\epsilon_{S^9X}f_1\epsilon_{S^6\Sigma X}f_2
\epsilon_{S^3\Sigma^2X}f_3\\
&\qquad =f_1f_2\epsilon_{S^3\Sigma^2X}f_3
=f_1\epsilon_{S^6\Sigma X}f_2f_3
=\epsilon_{S^9X}f_1f_2f_3,
\end{aligned}
\]
where the first equality uses $\epsilon^2=1$. This is \eqref{eq:odd-twist}. Applying $\Omega^9$ and using the commutation of $\epsilon$ with $\Omega$ yields \eqref{eq:shifted-odd-twist}.
\end{proof}

\section{The Chen--Liu--Lu--Zhang's construction}

In this section we record the precise ingredients from \cite{CLLZ} that will be used in the higher construction. Throughout this section, $k=\mathbb F_2$, all $\Lambda$-modules are finite-dimensional right modules, and
\[
\mathcal F=\operatorname{proj}\text{-}\Lambda.
\]
By \cite[Proposition 3.1]{CLLZ}, the algebra $\Lambda$ is self-injective. Hence $\operatorname{mod}\Lambda$ is a Frobenius category, and its stable category $\underline{\operatorname{mod}}\Lambda$ is triangulated with suspension $S=\Omega^{-1}$ by \cite[Theorem 2.6]{H}. Under the usual equivalence $\operatorname{mod}\mathcal F\simeq\operatorname{mod}\Lambda$, this is the stable category appearing in Section 2.

Let $Q$ be the linearly oriented quiver of type $A_5$,
\[
1\longrightarrow2\longrightarrow3\longrightarrow4\longrightarrow5,
\]
and let $\overline Q$ be its double. The type $A_5$ preprojective algebra is
\[
\Lambda=\Pi(A_5)=k\overline Q\Big/\left(\sum_{a\in Q_1}(aa^*-a^*a)\right).
\]
Let $\nu$ be the graph-reflection automorphism of \cite[Section 3.1]{CLLZ}; it has order two and realizes the Nakayama twist. It induces an automorphism
\[
\Sigma=(-)_\nu:\mathcal F\longrightarrow\mathcal F.
\]
By \cite[Remark 3.3]{CLLZ}, Chen--Liu--Lu--Zhang constructed a standard triangulation $\Delta_0$ on $(\mathcal F,\Sigma)$ with complete comparison
\begin{equation*}
\theta_0:\Sigma\xrightarrow{\sim}S^3.
\end{equation*}

We recall from \cite[Section 3.2]{CLLZ} the module which supports their twist. Let $M\in\operatorname{mod}\Lambda$ have dimension vector $(2,2,2,1,0)$ and arrow matrices
\[
x_1=\begin{bmatrix}0&1\\0&1\end{bmatrix},~
y_1=\begin{bmatrix}1&1\\0&0\end{bmatrix},~
x_2=\begin{bmatrix}1&0\\0&0\end{bmatrix},
\]
\[
y_2=\begin{bmatrix}0&1\\0&0\end{bmatrix},~
x_3=\begin{bmatrix}0&1\end{bmatrix},~
y_3=\begin{bmatrix}1\\0\end{bmatrix},
\]
with all maps at the fifth vertex equal to zero.

\begin{proposition}\rm\cite[Proposition 3.6]{CLLZ}\label{prop:M}
The module $M$ is indecomposable and
\[
\Omega M\cong M_\nu,~\Omega^2M\cong M,
\]
\begin{equation}\label{eq:endM}
\underline{\operatorname{End}}_\Lambda(M)\cong k[\rho]/(\rho^2),
\end{equation}
where $0\neq\rho$ is the unique nonzero radical stable endomorphism of $M$. Moreover,
\[
\underline{\operatorname{Hom}}_\Lambda(M,\Omega M)=0,
\qquad
\underline{\operatorname{Hom}}_\Lambda(\Omega M,M)=0.
\]
\end{proposition}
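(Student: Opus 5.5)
The plan is to verify everything by explicit linear algebra over $k=\mathbb F_2$, using the quiver description of $\Lambda=\Pi(A_5)$. Write $x_i\colon i\to i+1$ and $y_i\colon i+1\to i$ for the arrows, with the representation conventions of \cite[Section 3.2]{CLLZ}.

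\emph{Step 1: well-definedness and indecomposability.} First check that the given matrices satisfy the preprojective relations at each vertex: $y_1x_1=0$ at vertex $1$, $x_1y_1=y_2x_2$ at vertex $2$, $x_2y_2=y_3x_3$ at vertex $3$, and $x_3y_3=0$ at vertex $4$, in the appropriate composition order. This is a handful of $2\times2$ and $1\times2$ products. Next compute $\operatorname{End}_\Lambda(M)$ directly. A tuple $(g_1,g_2,g_3,g_4)\in M_2(k)^3\times k$ must commute with all six arrow matrices, which is a small homogeneous linear system. I expect the solution space to be a local algebra, spanned by the identity together with a small radical. This gives indecomposability and produces a candidate nonzero radical endomorphism $\rho$ with $\rho^2=0$.

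\emph{Step 2: the syzygies.} Compute the top of $M$ from the images of the arrows. This yields a projective cover $P\to M$, a sum of indecomposable projectives $e_i\Lambda$ whose structure is the standard one for $\Pi(A_5)$: Loewy length $5$, socle at vertex $6-i$. Then compute the kernel $\Omega M$. Its dimension vector is $\underline{\dim}P-(2,2,2,1,0)$, which should equal the $\nu$-reversed vector $(0,1,2,2,2)$.

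To identify $\Omega M$ with $M_\nu$, exhibit explicit bases in which its arrow matrices are those of $M$ transported by $\nu$, so that vertex $i\leftrightarrow 6-i$ and $x_i\leftrightarrow y_{5-i}$. Since $\nu$ induces an exact automorphism commuting with $\Omega$, it then follows that $\Omega^2M\cong\Omega(M_\nu)\cong(\Omega M)_\nu\cong M_{\nu^2}=M$, because $\nu^2=1$.

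\emph{Step 3: stable endomorphisms.} Determine which endomorphisms from Step 1 factor through a projective. Since $\Lambda$ is self-injective, it suffices to test factorisation through the projective cover $P\to M$ (equivalently through the injective envelope). A map factors if and only if it lifts along $P\to M$ composed with some map $M\to P$.

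Concretely, compute $\operatorname{Hom}_\Lambda(M,P)$. Because $M$ has no summand at vertex $5$ and $P$ has known socle, this space is small. Then compute its image in $\operatorname{End}(M)$, and show that this image meets the span of $1$ and $\rho$ trivially while killing the rest of the radical. This gives $\underline{\operatorname{End}}(M)\cong k[\rho]/(\rho^2)$, and uniqueness of $\rho$ is automatic over $\mathbb F_2$ since the radical is one-dimensional.

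\emph{Step 4: the vanishing statements.} Compute $\operatorname{Hom}_\Lambda(M,M_\nu)$ and $\operatorname{Hom}_\Lambda(M_\nu,M)$ as linear systems. The dimension vectors $(2,2,2,1,0)$ and $(0,1,2,2,2)$ overlap only in the middle vertices, so these spaces should be small. Then check that every such morphism factors through a projective, using the same lifting test as in Step 3. By the isomorphisms of Step 2, this gives $\underline{\operatorname{Hom}}(M,\Omega M)=0=\underline{\operatorname{Hom}}(\Omega M,M)$.

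\emph{Main obstacle.} I expect the main difficulty to be the bookkeeping in Steps 3–4: deciding exactly which morphisms factor through projectives. This needs an explicit model of the indecomposable projectives of $\Pi(A_5)$ over $\mathbb F_2$. I would first try to reduce each factorisation question to the single lifting problem along the projective cover computed in Step 2, so that only one auxiliary module needs to be written down.
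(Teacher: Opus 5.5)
Your plan is correct, and each step comes out as you predict. The paper offers no argument of its own here; it simply quotes \cite[Proposition 3.6]{CLLZ}. Your direct computation is therefore a self-contained substitute for the citation. The citation buys brevity and inherits CLLZ's conventions for $\nu$, while your route makes explicit which endomorphism survives stably.

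One caveat concerns Step 1. The radical of $\operatorname{End}_\Lambda(M)$ is $2$-dimensional, namely $k\rho_1\oplus k\rho_3$ with $(k\rho_1\oplus k\rho_3)^2=0$. Here $\rho_j$ sends the top vector $e_2$ to the socle vector $e_1$ at vertex $j\in\{1,3\}$ and vanishes elsewhere; top and socle of $M$ are both $S_1\oplus S_3$, and $M$ has Loewy length $3$. So Step 1 does not single out a candidate; Step 3 has to. There the projective cover is $P_1\oplus P_3$, $\operatorname{Hom}_\Lambda(M,P_1)=0$, and $\operatorname{Hom}_\Lambda(M,P_3)$ is $2$-dimensional. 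The socle vector $e_1\in M_3$ equals both $x_2x_1$ applied to the vertex-$1$ generator and $y_3x_3$ applied to the vertex-$3$ generator. This forces the projectively trivial endomorphisms to be exactly the line $k(\rho_1+\rho_3)$. So your $\rho$ must be $\rho_1$ or $\rho_3$, not their sum, and stably $\rho=\bar\rho_1=\bar\rho_3$.

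In Step 4, $\operatorname{Hom}_\Lambda(M,M_\nu)$ is one-dimensional, spanned by $M\twoheadrightarrow S_3\hookrightarrow\operatorname{soc}M_\nu$. This map factors as $M\twoheadrightarrow\operatorname{rad}^2P_5\subset P_5\to M_\nu$, so it vanishes stably, as you expect.

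You can shorten Step 4 considerably. Twisting by $\nu$ (with $\nu^2=1$) identifies $\underline{\operatorname{Hom}}_\Lambda(M_\nu,M)$ with $\underline{\operatorname{Hom}}_\Lambda(M,M_\nu)$, so only one vanishing needs checking. Better still, once $\Omega^2M\cong M$ is known, both vanishing statements amount to $\operatorname{Ext}^1_\Lambda(M,M)=0$. Crawley-Boevey's formula for preprojective algebras, whose proof works over any field, gives this at once as $2\dim\operatorname{End}_\Lambda(M)-(\underline{\dim}M,\underline{\dim}M)=6-6=0$, using only the Step 1 computation and no factorisation bookkeeping.
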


Put $\mathcal O=\{M,\Omega M\}$. By \cite[Proposition 3.7]{CLLZ} there is a natural transformation
\[
\eta:\operatorname{Id}_{\underline{\operatorname{mod}}\Lambda}
\longrightarrow
\operatorname{Id}_{\underline{\operatorname{mod}}\Lambda}
\]
supported on $\operatorname{add}\mathcal O$, with $\eta_M=\rho$, $\eta^2=0$, and commuting with $S$. Hence
$
\epsilon=1+\eta
$
is a natural automorphism commuting with $S$, and, because $k=\mathbb F_2$,
\begin{equation}\label{eq:epsilon-involution}
\epsilon^2=1.
\end{equation}
The twisted complete comparison is
\begin{equation}\label{eq:thetaepsilon}
(\theta_\epsilon)_N
=\epsilon_{S^3N}\circ(\theta_0)_N:
\Sigma N\xrightarrow{\sim}S^3N.
\end{equation}
By \cite[Proposition 3.7 and Corollary 3.8]{CLLZ}, $\theta_\epsilon$ is an isomorphism of triangle functors and determines a pre-triangulation $\Delta_\epsilon$ on $(\mathcal F,\Sigma)$.

The following fixed-boundary configuration collects the precise information from \cite{CLLZ} needed below. Properties (H1)--(H2) are supplied by their construction in Lemma 3.12, property (H3) uses the twist of Proposition 3.7 together with the orbit information built into that construction, and property (H4) is the fixed-boundary rigidity proved in Lemma 3.13. We include the first-kernel statement explicitly because it is used in Section 5.

\begin{lemma}\rm\cite[Propositions 3.7 and 3.11, Lemmas 3.12--3.13]{CLLZ}\label{lem:CLLZ}
There exist modules $A,B$ and exact $3$-$\Sigma$-sequences
\[
T_X:\quad E_{X,0}\xrightarrow{d_{X,0}}E_{X,1}
\xrightarrow{d_{X,1}}E_{X,2}
\xrightarrow{d_{X,2}}\Sigma E_{X,0},
~ X\in\{A,B,M\},
\]
with the following properties.
\begin{enumerate}
\item[(H1)] Under $\operatorname{mod}\mathcal F\simeq\operatorname{mod}\Lambda$, the first kernel of $T_X$ is $X$. There is a degreewise split short exact sequence of $\Sigma$-periodic complexes
\begin{equation}\label{eq:CLLZ-ses}
0\longrightarrow T_A\xrightarrow{x_\bullet}T_B\xrightarrow{q_\bullet}T_M\longrightarrow0.
\end{equation}
\item[(H2)] Each of $T_A,T_B,T_M$ has complete comparison $\theta_0$; equivalently, all three belong to $\Delta_0$.
\item[(H3)] Neither $A$ nor $B$ has a stable direct summand in $\mathcal O$. Consequently, the twist is trivial on their syzygy orbits and
$
T_A,T_B\in\Delta_\epsilon.
$
\item[(H4)] Keep $x_0$ and $x_1$ fixed. If
$x:E_{A,2}\to E_{B,2}$ completes them to a morphism $T_A\to T_B$, then
$
x=sx_2
$
for an automorphism $s$ of $E_{B,2}$ satisfying
$
sd_{B,1}=d_{B,1}, d_{B,2}s=d_{B,2}.
$
Equivalently, $(1,1,s)$ is a strict automorphism of the periodic sequence $T_B$ fixing its first two components.
\end{enumerate}
\end{lemma}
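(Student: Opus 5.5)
The plan is to assemble the lemma from the cited results of \cite{CLLZ}, checking that each ingredient fits the conventions of Section 2. First fix the data. Take the short exact sequence $0\to A\to B\to M\to 0$ in $\operatorname{mod}\Lambda$ of \cite[Lemma 3.12]{CLLZ}, which realises $M$ as the cone of a morphism between modules away from the orbit $\mathcal O$.

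\emph{Construction of the sequences and (H1).} Choose a $3$-$\Sigma$-sequence $T_A$ whose associated complex of representables is a segment of a complete injective resolution of $A$, with first kernel $A$. Apply the horseshoe construction to $0\to A\to B\to M\to0$. This yields compatible segments for $B$ and $M$ and a degreewise split short exact sequence of complexes of projective-injectives. To make the complexes $\Sigma$-periodic, construct one period and extend it by $\Sigma$. This uses that $\Sigma=(-)_\nu$ is exact and compatible with $\Omega$; for $M$ in particular it uses $\Omega M\cong M_\nu$ from Proposition \ref{prop:M}. The splittings are chosen on one period and transported by $\Sigma$, exactly as in the proof of Lemma \ref{333}. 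This gives \eqref{eq:CLLZ-ses}.

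\emph{(H2) and (H3).} By Theorem \ref{mai}, a sequence lies in $\Delta_0$ if and only if its complete comparison equals $\theta_0$. Since $\theta_0$ is a natural isomorphism, the closing maps $d_{X,2}$ can be modified by a suitable automorphism so that the comparison is $\theta_0$. For $T_A$, $T_B$ and $T_M$ compatibly, this is arranged by taking $\theta_0$ to be the comparison built from these very resolutions, as in \cite[Remark 3.3]{CLLZ}. For (H3), $\eta$ is supported on $\operatorname{add}\mathcal O$, so $\eta_A=0=\eta_B$ whenever $A$ and $B$ have no stable summand in $\mathcal O$. The same holds on all syzygies, since $\eta$ commutes with $S$. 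Hence $\epsilon=1$ there, $(\theta_\epsilon)_A=(\theta_0)_A$ and $(\theta_\epsilon)_B=(\theta_0)_B$, and therefore $T_A,T_B\in\Delta_\epsilon$ by \eqref{eq:thetaepsilon}. The absence of $\mathcal O$-summands is read off from the dimension vectors and the decomposition in \cite[Proposition 3.11]{CLLZ}.

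\emph{(H4), the main obstacle.} Let $x$ be another completion with $x_0,x_1$ fixed, and set $h=x-x_2$. Then $h\,d_{A,1}=0$ and $d_{B,2}h=0$. So $h$ factors through the cokernel of $d_{A,1}$ and lands in the kernel of $d_{B,2}$; both are cosyzygies of the first kernels. Thus $h$ corresponds to a module map between a cosyzygy of $A$ and one of $B$, taken up to maps factoring through projectives. I would compute this Hom space using the Hom-vanishing in Proposition \ref{prop:M} together with the explicit structure of $A$ and $B$. The aim is to show that any such $h$ has the form $(s-1)x_2$ with $s=1+h'$, where $h'\,d_{B,1}=0=d_{B,2}h'$. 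This is precisely the rigidity statement of \cite[Lemma 3.13]{CLLZ}. Finally, $s$ is an automorphism because $h'$ is radical, hence nilpotent, with $h'^2=0$ over $\mathbb F_2$. I expect this Hom computation to be the hard step and would defer to \cite[Lemma 3.13]{CLLZ} for the explicit matrices.
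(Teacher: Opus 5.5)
The gap is in getting (H1) and (H2) \emph{simultaneously}. The statement needs one degreewise split sequence whose three rows all have comparison exactly $\theta_0$.

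\begin{itemize}
\item \textbf{Closing the complexes.} After the horseshoe construction you have compatible coresolutions $0\to X\to E_{X,0}\to E_{X,1}\to E_{X,2}$. To close them into $\Sigma$-periodic complexes with $x_\bullet,q_\bullet$ morphisms of $3$-$\Sigma$-sequences, you need isomorphisms $\operatorname{coker}d_{X,1}\cong\Sigma X$. These must together form an isomorphism between $0\to\operatorname{coker}d_{A,1}\to\operatorname{coker}d_{B,1}\to\operatorname{coker}d_{M,1}\to0$ and $\Sigma(0\to A\to B\to M\to0)$. ``Construct one period and extend by $\Sigma$'' does not provide this, and this is exactly where the comparisons get fixed.
\item \textbf{Normalising separately.} You then normalise each closing map $d_{X,2}$ on its own. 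But altering any $d_{X,2}$ individually in general destroys the squares $d_{B,2}x_2=(\Sigma x_0)d_{A,2}$ and $d_{M,2}q_2=(\Sigma q_0)d_{B,2}$.
\item \textbf{Redefining $\theta_0$.} Your remedy, ``taking $\theta_0$ to be the comparison built from these very resolutions'', is not available. $\theta_0$ is the fixed isomorphism of triangle functors from \cite[Remark 3.3]{CLLZ}, from which $\Delta_0$, $\theta_\epsilon$ and $\mathcal N_\epsilon^{[3]}$ are all built. Three comparison maps read off from non-functorial resolutions of $A$, $B$, $M$ do not define a natural transformation $\Sigma\to S^3$, let alone a triangle-functor isomorphism.
\end{itemize}

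This is not a technicality; it is the very phenomenon the paper exploits. Your mechanism uses nothing about $\theta_0$ beyond naturality, and $\theta_\epsilon$ is also a natural isomorphism of triangle functors. Yet for the degreewise split sequence $0\to T_A\to T_B\to T_M\to0$ of (H1), (H3) gives $T_A,T_B\in\Delta_\epsilon$, while $T_M\notin\Delta_\epsilon$. Indeed, by (H2) the comparison of $T_M$ is $(\theta_0)_M$, and
\[
(\theta_0)_M\neq(\theta_\epsilon)_M=(1+S^3(\rho))\circ(\theta_0)_M .
\]
So naturality alone does not make the comparisons of the rows of a degreewise split sequence compatible; some input specific to $\theta_0$ is needed. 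Possible sources:
\begin{itemize}
\item the explicit construction of \cite[Lemma 3.12]{CLLZ}, which is where the paper takes (H1)--(H2) together;
\item if $\theta_0$ is induced by a natural exact construction $X\mapsto T_X$ (as for a standard triangulation coming from a bimodule resolution), applying it to $0\to A\to B\to M\to0$ gives (H1)--(H2) by naturality rather than by choice;
\item using (N4) for the triangulation $\Delta_0$ and transporting along Lemma~\ref{333}.
\end{itemize}

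Your (H3) argument is correct, and deferring (H4) to \cite[Lemma 3.13]{CLLZ} matches the paper. Two small corrections in your (H4) sketch:
\begin{itemize}
\item $h=x-x_2$ corresponds bijectively to an honest map in $\operatorname{Hom}_\Lambda(\operatorname{coker}d_{A,1},\ker d_{B,2})$, not to a stable class. What must be shown is exactly that restriction along the map $\operatorname{coker}d_{A,1}\to\operatorname{coker}d_{B,1}$ induced by $x_2$ is surjective onto this Hom space. Under $\operatorname{coker}d_{X,1}\cong\Sigma X$, that map is $\Sigma$ of $A\to B$.
\item $h'^2=0$ holds in every characteristic, simply because $h'$ vanishes on $\operatorname{im}d_{B,1}$ and takes values in it.
\end{itemize}
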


\section{Construction of the pre-$9$-angulation}

Apply the construction of Section 2.3 to the twisted comparison \eqref{eq:thetaepsilon}. Thus
\begin{equation*}
(\Theta_\epsilon^{[3]})_X
=S^6((\theta_\epsilon)_X)\circ
S^3((\theta_\epsilon)_{\Sigma X})\circ
(\theta_\epsilon)_{\Sigma^2X}:
\Sigma^3X\xrightarrow{\sim}S^9X.
\end{equation*}

\begin{construction}\label{cons}
Let $\mathcal N_\epsilon^{[3]}$ be the class of exact $9$-$\Sigma^3$-sequences $X_\bullet$ in $\mathcal F$ whose complete comparison satisfies
\[
\delta_{X_\bullet}=(\Theta_\epsilon^{[3]})_{K_X}.
\]
\end{construction}

\begin{proposition}\label{pre1}\rm
The triple $(\mathcal F,\Sigma^3,\mathcal N_\epsilon^{[3]})$ is a pre-$9$-angulated category.
\end{proposition}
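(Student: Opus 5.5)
The plan is to deduce Proposition \ref{pre1} directly from the Geiss--Keller--Oppermann parametrisation, Theorem \ref{mai}, applied with $n=7$ and automorphism $\Sigma^3$ in place of $\Sigma$. By Construction \ref{cons}, $\mathcal N_\epsilon^{[3]}$ is exactly the class $\mathcal N_\Theta$ attached to $\Theta=\Theta_\epsilon^{[3]}$. So the argument has two parts. First, we verify that the hypotheses of Theorem \ref{mai} hold for $(\mathcal F,\Sigma^3)$. Second, we verify that $\Theta_\epsilon^{[3]}\colon\Sigma^3\xrightarrow{\sim}S^9$ is an isomorphism of triangle functors, where $9=n+2$.

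For the first part, recall from Section 3 that $\Lambda$ is self-injective by \cite[Proposition 3.1]{CLLZ}. Hence $\operatorname{mod}\mathcal F\simeq\operatorname{mod}\Lambda$ is Frobenius, and $\mathcal F=\operatorname{proj}\text{-}\Lambda$ has split idempotents, since it is a Krull--Schmidt category of projectives over a finite-dimensional algebra. The automorphism $\Sigma=(-)_\nu$ is induced by an algebra automorphism, so it gives an exact automorphism of $\operatorname{mod}\Lambda$. The same then holds for $\Sigma^3=(-)_{\nu^3}=(-)_\nu$, because $\nu$ has order two. In particular $\Sigma^3$ induces a triangle automorphism of $\underline{\operatorname{mod}}\Lambda$.

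For the second part, we use the fact recorded after \eqref{eq:thetaepsilon}: by \cite[Proposition 3.7, Corollary 3.8]{CLLZ}, $\theta_\epsilon\colon\Sigma\to S^3$ is an isomorphism of triangle functors. Then \eqref{eq:third-power} presents $\Theta_\epsilon^{[3]}$ as the composite
\[
\Sigma^3=\Sigma\Sigma^2\xrightarrow{\theta_\epsilon\Sigma^2}S^3\Sigma^2\xrightarrow{S^3\theta_\epsilon\Sigma}S^6\Sigma\xrightarrow{S^6\theta_\epsilon}S^9 .
\]
This composite uses the canonical commutation isomorphisms $\Sigma S\cong S\Sigma$, which are identities here because $\Sigma$ commutes strictly with $\Omega$ after choosing twisted projective covers. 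Whiskering a triangle-functor isomorphism by triangle functors again gives a triangle-functor isomorphism, and so does vertical composition. Hence $\Theta_\epsilon^{[3]}$ is an isomorphism of triangle functors $\Sigma^3\to S^9$. The sign $(-1)^{n+2}$ in the triangle-functor structure of $S^{9}$ is invisible over $\mathbb F_2$, so no sign bookkeeping is needed.

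Theorem \ref{mai} then shows that $\mathcal N_{\Theta_\epsilon^{[3]}}=\mathcal N_\epsilon^{[3]}$ is a pre-$9$-angulation, which proves the proposition. As a consistency check, Lemma \ref{lem:concatenation} shows that $P_3(T)\in\mathcal N_\epsilon^{[3]}$ for every $T\in\Delta_\epsilon$, so the class is nonempty and contains the expected objects. The main obstacle is the second part: making sure that the triangle-functor structure of the composite agrees with the one Theorem \ref{mai} requires on $S^{9}$. This means checking that the commutation isomorphisms $\Sigma S\cong S\Sigma$ and $S^aS^b=S^{a+b}$ are compatible with the connecting isomorphisms. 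I would handle this by working over $\mathbb F_2$ with strictly commuting models, so that every coherence isomorphism is the identity.
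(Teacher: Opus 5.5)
Your proposal is correct and is essentially the paper's proof: both apply Theorem \ref{mai} with $n=7$ and $\Sigma^3$ in place of $\Sigma$, after checking three things. These are that $\mathcal F$ has split idempotents, that $\operatorname{mod}\mathcal F$ is Frobenius, and that $\Theta_\epsilon^{[3]}$ is an isomorphism of triangle functors because it is a composite of whiskered copies of $\theta_\epsilon$. The differences are minor: you get the Frobenius property directly from self-injectivity of $\Lambda$ rather than from the triangulation $\Delta_0$, and your strict-model detour is unnecessary, since the composite in \eqref{eq:third-power} is well typed without any commutation isomorphism $\Sigma S\cong S\Sigma$, and whiskering and vertical composition send morphisms of triangle functors to morphisms of triangle functors for arbitrary structure maps.
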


\begin{proof}
The category $\mathcal F$ has split idempotents, and $\operatorname{mod}\mathcal F$ is Frobenius because $(\mathcal F,\Sigma,\Delta_0)$ is triangulated. The natural isomorphism $\Theta_\epsilon^{[3]}:\Sigma^3\to S^9$ is an isomorphism of triangle functors by Section 2.3. The assertion therefore follows from Theorem \ref{mai} with $n=7$.
\end{proof}

By (H3), $T_A,T_B\in\Delta_\epsilon$. Lemma \ref{lem:concatenation} therefore gives
\begin{equation}\label{eq:P3AB}
P_3(T_A),\;P_3(T_B)\in\mathcal N_\epsilon^{[3]}.
\end{equation}
The morphism $x_\bullet$ in \eqref{eq:CLLZ-ses} concatenates with its two $\Sigma$-translates to a morphism
\begin{equation}\label{eq:P3x}
P_3(x_\bullet):P_3(T_A)\longrightarrow P_3(T_B).
\end{equation}

For later use, let $\Theta_0^{[3]}$ be the third power of $\theta_0$ and put
\[
(\lambda_*^{[3]})_X
=\Omega^9\bigl((\Theta_*^{[3]})_X\bigr):
\Omega^9\Sigma^3X\longrightarrow X,
~*=0,\epsilon.
\]
By Lemma \ref{lem:odd-twist}, \eqref{eq:epsilon-involution}, and $\epsilon_M=1_M+\rho$, one has the well-typed identity
\begin{equation}\label{eq:obstruction}
(\lambda_\epsilon^{[3]})_M
=(1_M+\rho)\circ(\lambda_0^{[3]})_M.
\end{equation}
This is the point at which the odd power is essential: an even power would contain an even number of $\epsilon$-factors and the involution \eqref{eq:epsilon-involution} would remove the twist.

\section{Failure of the higher mapping-cone axiom}

\begin{theorem}\label{th1}\rm
The pre-$9$-angulated category $(\mathcal F,\Sigma^3,\mathcal N_\epsilon^{[3]})$ does not satisfy axiom \textbf{(N4)}.
\end{theorem}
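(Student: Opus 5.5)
We argue by contradiction: assume $(\mathcal F,\Sigma^3,\mathcal N_\epsilon^{[3]})$ satisfies \textbf{(N4)}. Then Cheng--Yang's Axiom A (Lemma \ref{AX}) applies to the morphism $P_3(x_\bullet)\colon P_3(T_A)\to P_3(T_B)$ of \eqref{eq:P3x}, whose rows lie in $\mathcal N_\epsilon^{[3]}$ by \eqref{eq:P3AB}. We use it with $i=2$, keeping the first three components $x_0,x_1,x_2$ of $P_3(x_\bullet)$ fixed. This fixes the first block on which the rigidity (H4) acts. Axiom A then yields a completion $\varphi=(x_0,x_1,x_2,\varphi_3,\dots,\varphi_8)$ whose mapping cone $C_\varphi$ belongs to $\mathcal N_\epsilon^{[3]}$.

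The first goal is to show that $\varphi$ may be replaced by $P_3(x_\bullet)$ up to a strict automorphism of $P_3(T_B)$. This is done block by block using (H4). In the second block, the maps $\varphi_3,\varphi_4$ together with $\Sigma x_0$ complete a morphism $\Sigma T_A\to\Sigma T_B$ in the sense of the $3$-$\Sigma$-sequence. To make this work, we first need $\varphi_3=\Sigma x_0$ and $\varphi_4=\Sigma x_1$ rather than arbitrary maps. I would instead apply Axiom A with the largest possible $i$. Taking $i=8$ fixes everything except $\varphi_8$, and (H4) for the translated sequence $\Sigma^2T$ then gives $\varphi_8=s\,\Sigma^2x_2$ with $(1,1,s)$ a strict automorphism of $\Sigma^2T_B$. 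Since Axiom A holds for every $1\le i\le n$, the choice $i=n=7$ is legitimate. We fix $\varphi_j=P_3(x)_j$ for $j\le 7$, and (H4) (translated by $\Sigma^2$) forces $\varphi_8=s\,\Sigma^2x_2$.

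Next we conjugate by the strict automorphism $(1,\dots,1,s)$ of $P_3(T_B)$. This is an isomorphism of $9$-$\Sigma^3$-sequences, so by (N1)(a) the cone of $P_3(x_\bullet)$ itself lies in $\mathcal N_\epsilon^{[3]}$. On the other hand, $P_3$ of the degreewise split sequence \eqref{eq:CLLZ-ses} is a degreewise split short exact sequence of $9$-$\Sigma^3$-periodic complexes $0\to P_3(T_A)\to P_3(T_B)\to P_3(T_M)\to 0$. Its cone agrees with the (N4) cone up to the sign conventions, and these signs are trivial over $\mathbb F_2$. Lemma \ref{333} with $\Gamma=\Sigma^3$ then gives a stable isomorphism $u\colon K_C\to M$ satisfying \eqref{eq:comparison-naturality}. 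Together with naturality of $\Theta_\epsilon^{[3]}$, this shows that $\delta_{P_3(T_M)}=(\Theta_\epsilon^{[3]})_M$.

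We compute $\delta_{P_3(T_M)}$ independently. By (H2), $T_M\in\Delta_0$, so Lemma \ref{lem:concatenation} applied to $\theta_0$ gives $\delta_{P_3(T_M)}=(\Theta_0^{[3]})_M$. Applying $\Omega^9$, we would get $(\lambda_\epsilon^{[3]})_M=(\lambda_0^{[3]})_M$. Comparing with \eqref{eq:obstruction}, this means $\rho\circ(\lambda_0^{[3]})_M=0$. Since $(\lambda_0^{[3]})_M$ is an isomorphism, this forces $\rho=0$, contradicting Proposition \ref{prop:M}.

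The main obstacle is the rigidity step. We must check that (H4), stated for $T_A\to T_B$, transfers to the third block $\Sigma^2T_A\to\Sigma^2T_B$ inside the $9$-angle. This transfer should follow by applying the automorphism $\Sigma^2$, which preserves all the data. We also need the modified last component to leave the final square to $\Sigma^3$ of the first component commuting, and this is guaranteed by $d_{B,2}s=d_{B,2}$. A secondary point is matching the cone convention of (N4) with that of Lemma \ref{333}; over $\mathbb F_2$ the two coincide.
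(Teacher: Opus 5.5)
Your proposal is correct and follows the paper's proof essentially step for step. The steps match: Axiom A with $i=n=7$; rigidity (H4) transported by $\Sigma^2$, so that $\varphi_8$ is fixed up to a strict automorphism of $P_3(T_B)$; Lemma \ref{333} with $\Gamma=\Sigma^3$ together with naturality of $\Theta_\epsilon^{[3]}$, forcing $\delta_{P_3(T_M)}=(\Theta_\epsilon^{[3]})_M$; and the contradiction with $(\Theta_0^{[3]})_M$ via \eqref{eq:obstruction}. In a clean write-up, drop the abandoned $i=2$ attempt and the ``$i=8$'' slip: with $i=7$ the components $\varphi_0,\dots,\varphi_7$ are fixed and only $\varphi_8$ is free.
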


\begin{proof}
Assume for contradiction that \textbf{(N4)} holds. Write the nine components of the morphism \eqref{eq:P3x}, using indices $0,\ldots,8$, as
\[
x_0,x_1,x_2,\Sigma x_0,\Sigma x_1,\Sigma x_2,
\Sigma^2x_0,\Sigma^2x_1,\Sigma^2x_2.
\]
By \eqref{eq:P3AB}, both rows lie in $\mathcal N_\epsilon^{[3]}$. Apply Lemma \ref{AX} with $n=7$ and $i=7$ to the first eight components. We obtain a morphism of $9$-angles
\[
\phi\colon P_3(T_A)\longrightarrow P_3(T_B)
\]
such that
\begin{equation}\label{eq:phi-fixed}
\phi_j=P_3(x_\bullet)_j~(0\leq j\leq7),
~
\operatorname{Cone}(\phi)\in\mathcal N_\epsilon^{[3]}.
\end{equation}
The last three components $\phi_6,\phi_7,\phi_8$ form a morphism
$\Sigma^2T_A\to\Sigma^2T_B$. By \eqref{eq:phi-fixed}, its first two components are $\Sigma^2x_0$ and $\Sigma^2x_1$. Applying (H4) after $\Sigma^2$, there is an automorphism $s$ of $E_{B,2}$ as in (H4) such that
\[
\phi_8=(\Sigma^2s)(\Sigma^2x_2).
\]
Let $a$ be the automorphism of $P_3(T_B)$ which is the identity in degrees $0,1,\cdots,7$ and has component $\Sigma^2s^{-1}$ in degree $8$. The relations in (H4) imply that $a$ is indeed an automorphism of the $9$-$\Sigma^3$-sequence $P_3(T_B)$, and
$
a\phi=P_3(x_\bullet).
$
The mapping cones of $\phi$ and $a\phi$ are isomorphic by the induced block-diagonal change of coordinates. Since $\mathcal N_\epsilon^{[3]}$ is closed under isomorphisms, \eqref{eq:phi-fixed} yields
\begin{equation}\label{eq:cone-standard-in-N}
\operatorname{Cone}(P_3(x_\bullet))\in\mathcal N_\epsilon^{[3]}.
\end{equation}
Concatenating \eqref{eq:CLLZ-ses} gives a degreewise split short exact sequence of $9$-$\Sigma^3$-periodic complexes
\begin{equation*}
0\longrightarrow P_3(T_A)\xrightarrow{P_3(x_\bullet)}P_3(T_B)
\longrightarrow P_3(T_M)\longrightarrow0.
\end{equation*}
Set
\[
C_\bullet=\operatorname{Cone}(P_3(x_\bullet)),
~ K_C=K_{C_\bullet}.
\]
By Lemma \ref{333}, applied with $n=7$ and periodicity automorphism $\Gamma=\Sigma^3$, the canonical projection
\[
p\colon C_\bullet\longrightarrow P_3(T_M)
\]
is a periodic homotopy equivalence. Let
$
u\colon K_C\xrightarrow{\sim}M
$
be the induced stable isomorphism of first kernels, where we use (H1) to identify the first kernel of $P_3(T_M)$ with $M$. By (H2) and Lemma \ref{lem:concatenation},
\begin{equation*}
\delta_{P_3(T_M)}=(\Theta_0^{[3]})_M.
\end{equation*}
The naturality formula \eqref{eq:comparison-naturality} of Lemma \ref{333} therefore gives
\begin{equation}\label{eq:cone-comparison-square}
S^9(u)\circ\delta_{C_\bullet}
=(\Theta_0^{[3]})_M\circ\Sigma^3(u).
\end{equation}
On the other hand, \eqref{eq:cone-standard-in-N} and the definition of $\mathcal N_\epsilon^{[3]}$ give
\[
\delta_{C_\bullet}=(\Theta_\epsilon^{[3]})_{K_C}.
\]
Since $\Theta_\epsilon^{[3]}\colon \Sigma^3\to S^9$ is a natural transformation, naturality with respect to $u$ yields
\begin{equation}\label{eq:twisted-comparison-square}
S^9(u)\circ(\Theta_\epsilon^{[3]})_{K_C}
=(\Theta_\epsilon^{[3]})_M\circ\Sigma^3(u).
\end{equation}
Combining \eqref{eq:cone-comparison-square} and \eqref{eq:twisted-comparison-square}, and cancelling the isomorphism $\Sigma^3(u)$ in the stable category, we obtain
\[
(\Theta_\epsilon^{[3]})_M=(\Theta_0^{[3]})_M.
\]
Applying $\Omega^9$ and using \eqref{eq:obstruction} gives
\[
(1_M+\rho)\circ(\lambda_0^{[3]})_M=(\lambda_0^{[3]})_M.
\]
Since $(\lambda_0^{[3]})_M$ is an isomorphism, it follows that $\rho=0$, contradicting \eqref{eq:endM}. Therefore \textbf{(N4)} fails.
\end{proof}

\begin{proof}[\bf \emph{Proof of Theorem \ref{HHZ}}]~
Proposition \ref{pre1} gives the required pre-$9$-angulation, and Theorem \ref{th1} shows that it is not $9$-angulated.
\end{proof}
\vspace{2mm}

\paragraph{Competing Interests}
The authors declare that they have no conflicts of interest to this work.

\paragraph{Data Availability}
Data sharing not applicable to this article as no datasets were generated or analysed during the current study.

\textbf{Jian He}\\
Department of Applied Mathematics, Lanzhou University of Technology,\\
730050 Lanzhou, Gansu, P. R. China\\
E-mail: \textsf{jianhe30@163.com}\\[0.3cm]
\textbf{Jing He}\\
School of Mathematics and Statistics, Hunan University of Technology and Business, 410205 Changsha, Hunan P. R. China\\
E-mail: \textsf{jinghe1003@163.com}
\\[0.3cm]
\textbf{Panyue Zhou}\\
School of Mathematics and Statistics, Changsha University of Science and Technology, 410114 Changsha, Hunan,  P. R. China\\
E-mail: \textsf{panyuezhou@163.com}

\end{document}